\newtheorem{theorem}{Theorem}[section]
\newtheorem{thm}{Theorem}[section]
\newtheorem{lemma}[theorem]{Lemma}
\newtheorem{definition}[theorem]{Definition}
\newtheorem{dfn}[theorem]{Definition}
\newtheorem{alg}[theorem]{Algorithm}
\newtheorem{remark}[theorem]{Remark}
\newtheorem{assumption}[theorem]{Assumption}
\newcommand{\nr}[1]{\ensuremath{\left\|{#1}\right\|}}
\newcommand{\nrs}[1]{\ensuremath{\|{#1}\|}} 
\journal{arXiv}
\begin{document}

\begin{frontmatter}



\title{ Enabling fast convergence of the iterated penalty Picard iteration with $O(1)$ penalty parameter for incompressible Navier-Stokes via Anderson acceleration}


\author[label1]{Leo G. Rebholz \corref{cor1}\fnref{lab1}}
\ead{rebholz@clemson.edu}
\author[label1]{Duygu Vargun \fnref{lab1}}
\ead{dvargun@clemson.edu}
\address[label1]{Department of Mathematical Sciences, Clemson University, Clemson, SC 29634, USA}
\fntext[lab1]{This  author  was  partially  supported  by  NSF  Grant  DMS 2011490.}

\author[label2]{Mengying Xiao}
\ead{mxiao@uwf.edu}
\address[label2]{Department of Mathematics and Statistics, University of West Florida, Pensacola, FL 32514, USA}

\cortext[cor1]{Corresponding  author.}

\begin{abstract}
This paper considers an enhancement of the classical iterated penalty Picard (IPP) method for the incompressible Navier-Stokes equations, where we restrict our attention to $O(1)$ penalty parameter, and Anderson acceleration (AA) is used to significantly improve its convergence properties. After showing the fixed point operator associated with the IPP iteration is Lipschitz continuous and Lipschitz continuously (Frechet) differentiable, we apply a recently developed general theory for AA to conclude that IPP enhanced with AA improves its linear convergence rate by the gain factor associated with the underlying AA optimization problem. Results for several challenging numerical tests are given and show that IPP with penalty parameter 1 and enhanced with AA is a very effective solver. 
\end{abstract}

%

%
%
%

\end{frontmatter}


\section{Introduction}
We consider solvers for the incompressible Navier-Stokes equations (NSE), which are given by

\begin{align}
u_t + u\cdot\nabla u + \nabla p - \nu\Delta u & = f, \label{ns1} \\
\nabla \cdot u & = 0, \label{ns2} 
\end{align}

where $u$ and $p$ are the unknown velocity and pressure, $\nu$ is the kinematic viscosity which is inversely proportional to the Reynolds number $Re$, and $f$ is a known function representing external forcing. For simplicity we assume no-slip boundary conditions and a steady flow ($u_t =0$) as well as small data so as to be consistent with steady flow, but our analysis and results can be extended to other common boundary conditions and temporarily discretized transient flows with only minor modifications.  Due to the wide applicability of \eqref{ns1}-\eqref{ns2} across science and engineering, many nonlinear solvers already exist for it \cite{JKN17}, with the most popular being Picard and Newton iterations \cite{GR86}. Newton's iteration converges quadratically once near a root, but requires a good initial guess, especially for higher $Re$ \cite{GR86}. The Picard iteration for the NSE is linearly convergent, but also globally convergent and is much more robust for higher $Re$ \cite{GR86, PR21}. 

Herein, we consider Anderson acceleration (AA) of the iterated penalty Picard (IPP) iteration. The IPP iteration is generally more efficient than Picard for a single iteration since the linear solve is easier/cheaper, but compared to Picard it can be less robust and require more iterations if the penalty parameter is not chosen correctly.    The IPP scheme for the NSE is given in \cite{C93} as: Given $u_k, p_k$, solve for $u_{k+1},p_{k+1}$ from 
\begin{align}
u_{k}\cdot\nabla u_{k+1} + \nabla p_{k+1} - \nu\Delta u_{k+1} & = f, \label{ns4} \\
\epsilon p_{k+1} + \nabla \cdot u_{k+1} & = \epsilon p_k, \label{ns5} 
\end{align}
where $\epsilon>0$ is a penalty parameter, generally taken small.    The system \eqref{ns4}-\eqref{ns5} is equivalent to the velocity-only system
\begin{equation}
u_{k}\cdot\nabla u_{k+1} - \epsilon^{-1}  \nabla (\nabla \cdot u_{k+1})  - \nu\Delta u_{k+1}  = f + \epsilon^{-1}  \sum_{j=0}^{k} \nabla (\nabla \cdot u_j), \label{ns7}
\end{equation}
which is used in \cite{G89It, S21, HS18, RX15}, and the pressure can be expressed in terms of velocities, i.e. $p_{k+1} = -\epsilon^{-1} \sum\limits_{j=0}^{k+1} \nabla \cdot u_j$.
There are several advantages to using the IPP, including the pressure in the continuity equation allows for circumventing the inf-sup condition on the velocity and pressure spaces \cite{C93}, and Scott-Vogelius elements can be used without any mesh restriction and will produce a pointwise divergence free solution along with many advantages this brings.
Codina showed in \cite{C93} that a discretization of \eqref{ns4}-\eqref{ns5} converges linearly under a small data condition and sufficiently small $\epsilon$, and has a better convergence rate if the penalty parameter $\epsilon$ is chosen sufficiently small.  

Unfortunatly, with small $\epsilon$ the advantages of using \eqref{ns4}-\eqref{ns5} diminish since the same nonsymmetric saddle point system of the usual Picard iteration is recovered as $\epsilon\rightarrow 0$, and if IPP is computed via \eqref{ns7}, then $\epsilon<1$ can lead to linear systems that most common preconditioned iterative linear solvers will have difficulty resolving \cite{S99,OT14}.  Hence even though the IPP is theoretically effective when $\epsilon$ is small, its use has largely died out over the past few decades since small $\epsilon$ leads to the need for direct linear solvers, but direct linear solvers are not effective on most large scale problems of modern interest.  Hence, in an effort to show (properly enhanced with AA) IPP can still be a very competitive solver on any size problem, we completely avoid the notion of small $\epsilon$ and in our numerical tests use only $\epsilon=1$, where preconditioned iterative methods found success on linear systems resembling \eqref{ns7} \cite{HR13,OT14,CLLRW13,BL12}.

This paper presents an analytical and numerical study of AA applied to IPP, without assuming small $\epsilon$.  AA has recently been used to improve convergence and robustness of solvers for a wide range of problems including various types of flow problems \cite{LWWY12,PRX19,PRX21}, geometry optimization \cite{PDZGQL18}, radiation diffusion and nuclear physics \cite{AJW17,TKSHCP15}, machine learning \cite{GS19}, molecular interaction \cite{SM11}, computing nearest correlation matrices \cite{HS16}, and many others e.g. \cite{WaNi11,K18,LW16,LWWY12,FZB20,WSB21}. In particular, AA was used in \cite{PRX19} to make the Picard iteration for \eqref{ns1}-\eqref{ns2} more robust with respect to $Re$ and to converge significantly faster.  Hence it is a natural and important next step to consider AA applied to IPP, which is a classical NSE solver but is not always effective when $\epsilon < 1$ due to linear solver difficulties.  Herein we formulate IPP equipped with a finite element discretization as a fixed point iteration $u_{k+1}=G(u_k)$, where $G$ is a solution operator to discrete linear system.  We then prove that $G$ is continuously (Frechet) differentiable, allowing us to invoke the AA theory from \cite{PR21}, which implies AA will improve the linear convergence rate of the iteration by a factor (less than 1) representing the gain of the underlying AA optimization problem.  Results of several numerical tests are also presented, which shows IPP using $\epsilon=1$ and enhanced with AA can be a very effective solver for the NSE.

This paper is arranged as follows: In Section 2, we provide notation and mathematical preliminaries on the finite element discretizations and AA. In section 3, we present the IPP method and prove associated fixed point solution operator properties. In section 4, we give the Anderson accelerated IPP scheme and present a convergence result. In section 5, we report on the results of several numerical tests, which demonstrate a significant (and sometimes dramatic) positive impact on the convergence.

\section{Notation and preliminaries}
We consider a domain $\Omega \subset \mathbb{R}^d$ ($d = 2,3$) that is open, connected, and with Lipschitz boundary $\partial \Omega$. The $L^2(\Omega)$ norm and inner product will be denoted by $\| \cdot \|$ and $(\cdot ,\cdot)$. Throughout this paper, it is understood by context whether a particular space is scalar or vector valued, and so we do not distinguish notation.

The natural function spaces for velocity and pressure in this setting are given by
\begin{align*}
X:=& H_0^{1}(\Omega) = \{ v\in L^2(\Omega)\mid \nabla v \in L^2(\Omega), v|_{\partial\Omega} = 0\},\\
Q:=& L_0^2(\Omega) = \{ q \in L^2(\Omega) \mid \int_\Omega q \ dx = 0\}.
\end{align*}
In the space $X$,  the Poincar\'e inequality holds \cite{laytonbook}: there exists a constant $C_P>0$ depending only on $\Omega$ such that for any $\phi\in X$,
\[
\| \phi \| \le C_P \| \nabla \phi \|.
\]
The dual space of $X$ will be denoted by $X'$, with norm $\|\cdot \|_{-1}$.
We define the skew-symmetric trilinear operator $b^*:X\times X \times X \rightarrow \mathbb{R}$ by
\begin{align*}
b^*(u, v, w):=\frac{1}{2}(u\cdot\nabla  v,  w) - \frac{1}{2} (u\cdot\nabla  w,  v),
\end{align*}
which satisfies 
\begin{align}\label{skewsymmest}
b^*(u, v, w)\leq M \|\nabla u\| \|\nabla v\| \|\nabla w\|,
\end{align}
for any $u,v,w\in X$, where $M$ is a constant depending on $|\Omega|$ only, see \cite{laytonbook}.

In our analysis, the following natural norm on  $(X,Q)$ arises
\begin{align}
\label{eqn:norm}
\|(v,q)\|_X \coloneqq \sqrt{\nu\|\nabla v\|^2+ \epsilon\|q\|^2}.
\end{align}

The FEM formulation of the steady NSE is given as follows: Find $(u,p)\in(X_h,Q_h)$ such that
\begin{equation}\label{weaknese}
\begin{aligned}
\nu(\nabla u,\nabla v)+b^*( u, u, v)-(p,\nabla\cdot v)&=( f, v),\\
(q,\nabla\cdot u)&=0,
\end{aligned}
\end{equation}
for all $( v,q)\in(X_h,Q_h).$ It is known that system \eqref{weaknese} has solutions for any data, and those solutions are unique if the small data condition $\kappa :=\nu^{-2}M\|f\|_{-1}<1$ is satisfied.  Moreover, all solutions to \eqref{weaknese} are bounded by $\|\nabla u\|\le \nu^{-1}\|f\|_{-1}.$ 

\begin{assumption}\label{assume:kappa}
	We will assume in our analysis that $\kappa<1$ and so that \eqref{weaknese} is well-posed.
\end{assumption}

\subsection{Discretization preliminaries}

We denote with $\tau_h$ a conforming, shape-regular, and simplicial triangulation of $\Omega$ with $h$ denoting the maximum element diameter of $\tau_h$. We represent the space of degree $k$ globally continuous piecewise polynomials on $\tau_h$ by $P_k(\tau_h)$, and $P_k^{disc}(\tau_h)$ the space of degree $k$ piecewise polynomials that can be discontinuous across elements. 

We choose the discrete velocity space by $X_h=X\cap P_k(\tau_h)$ and the pressure space $Q_h=\nabla\cdot X_h \subseteq Q$. With this choice of spaces, the discrete versions of \eqref{ns4}-\eqref{ns5} and \eqref{ns7} are equivalent, although in our computations we use only \eqref{ns7} and so the pressure space is never explicitly used. As discussed in \cite{HS18}, pressure recovery via the $L^2$ projection of $-\epsilon^{-1}  \sum\limits_{j=0}^{\infty} \nabla \cdot u_j$ into $Q\cap P_{k-1}(\tau_h)$ will yield a continuous and optimally accurate pressure. Under certain mesh structures, the $(X_h,Q_h)$ pair will satisfy the discrete inf-sup condition \cite{Z10, Z10a, NS15,arnold:qin:scott:vogelius:2D,GS19}.  While inf-sup is important for small $\epsilon$ in the IPP, our focus is on $\epsilon=1$ and so this compatibility condition is not necessary for our analysis to hold.

\subsection{Anderson acceleration}
Anderson acceleration is an extrapolation method used to improve convergence of fixed-point iterations.
Following  \cite{ToKe15,WaNi11,PRX19}, it may be stated as follows, where $Y$ is a normed vector space and $g: Y \rightarrow Y$.

\begin{alg}[Anderson iteration] \label{alg:anderson}
	Anderson acceleration with depth $m$ and damping factors $\beta_k$.\\ 
	Step 0: Choose $x_0\in Y.$\\
	Step 1: Find $w_1\in Y $ such that $w_1 = g(x_0) - x_0$.  
	Set $x_1 = x_0 + w_1$. \\
	Step $k$: For $k=2,3,\ldots$ Set $m_k = \min\{ k-1, m\}.$\\
	\indent [a.] Find $w_{k} = g(x_{k-1})-x_{k-1}$. \\
	\indent [b.] Solve the minimization problem for the Anderson coefficients $\{ \alpha_{j}^{k}\}_{j = 1}^{m_k}$
	\begin{align}\label{eqn:opt-v0}
	\textstyle \min 
	\left\| \left(1- \sum\limits_{j=1}^{m_k} \alpha_j^{k} \right) w_k + \sum\limits_{j = 1}^{m_k}  \alpha_j^{k}  w_{k-j} \right\|_Y.
	\end{align}
	\indent [c.] For damping factor $0 < \beta_k \le 1$, set
	\begin{align}\label{eqn:update-v0}
	\textstyle
	x_{k} 
	= (1-\sum\limits_{j = 1}^{m_k}\alpha_j^k) x_{k-1} + \sum\limits_{j= 1}^{m_k} \alpha_j^{k} x_{j-1}
	+ \beta_k \left(  (1- \sum\limits_{j= 1}^{m_k} \alpha_j^{k}) w_k + \sum\limits_{j=1}^{m_k}\alpha_j^k w_{k-j}\right),
	\end{align}
	where $w_{j} = g(x_{j-1}) - x_{j-1}$ is the nonlinear residual (and also sometimes referred to as the update step).
\end{alg}

Note that depth $m=0$ returns the original fixed-point iteration.  
We define the optimization gain factor $\theta_{k}$ by 
\begin{align}\label{eqn:thetadef}
\theta_k = \frac{ \left\| \left(1- \sum\limits_{j=1}^{m_k} \alpha_j^{k} \right) w_k + \sum\limits_{j = 1}^{m_k}  \alpha_j^{k}  w_{k-j} \right\|_Y } {\nrs{w_{k}}_Y},
\end{align}
representing the ratio gain of the minimization problem \eqref{eqn:opt-v0} using $m_k$ compared to the $m=0$ (usual fixed point iteration) case.
The gain factor $\theta_k$ plays a critical role in the general AA convergence theory \cite{EPRX20,PR21} 
that reveals how AA improves convergence: specifically, the acceleration reduces the first-order residual 
term by a factor of $\theta_k$, but introduces higher-order terms into the residual 
expansion.  

The next two assumptions give sufficient conditions
on the fixed point operator $g$ for the theory of \cite{PR21} to be applied.
\begin{assumption}\label{assume:g}
	Assume $g\in C^1(Y)$ has a fixed point $x^\ast$ in $Y$,
	and there are positive constants $C_0$ and $C_1$ with
	\begin{enumerate}
		\item $\nr{g'(x)}_Y \le C_0$ for all $x\in Y$, and 
		\item $\nr{g'(x) - g'(y)}_Y \le C_1 \nr{x-y}_Y$
		for all $x,y \in Y$.
	\end{enumerate}
\end{assumption}

\begin{assumption}\label{assume:fg} 
	Assume there is a constant $\sigma> 0$ for which the differences between consecutive
	residuals and iterates satisfy
	$$ \| w_{{k}+1} - w_{k}\|_Y  \ge \sigma \| x_{k} - x_{{k}-1} \|_Y, \quad {k} \ge 1.$$
\end{assumption}
Assumption \ref{assume:fg} is satisfied, for example, if $g$ is contractive (i.e. if $C_0<1$ in Assumption 2.2).  Other ways that the assumption is satisfied
are discussed in \cite{PR21}.  Under Assumptions \ref{assume:g} and \ref{assume:fg}, 
the following result from \cite{PR21} produces a bound on the residual $\nr{w_{k+1}}$ in terms
of the previous residuals.

\begin{theorem}[Pollock et al., 2021]  \label{thm:genm}
Let Assumptions \ref{assume:g} and \ref{assume:fg} hold, and suppose the direction sines between each column $j$ of  matrix 
\begin{align}
\label{mtxF}
F_j = \left( \begin{array}{cccccc}(w_{j}-w_{j-1}) & (w_{j-1} - w_{j-2}) &    \cdots & (w_{j-m_j+1} - w_{j-m_j}) \end{array} \right) = (f_{j,i}) 
\end{align}
and the subspace spanned by the preceeding columns satisfies $|\sin(f_{j,i},\text{span }\{f_{j,1},$ $\ldots, f_{j,i-1}\})| \ge c_s >0$, for $j = 1, \dots, m_k$. Then the residual $w_{k+1} = g(x_k)-x_k$ from Algorithm \ref{alg:anderson} (depth $m$) satisfies the bound
\begin{align}\label{eqn:genm}
\nr{w_{k+1}}_Y & \le \nr{w_k}_Y \Bigg(
\theta_k ((1-\beta_{k}) + C_0 \beta_{k})
+ \frac{C C_1\sqrt{1-\theta_k^2}}{2}\bigg(
\nr{w_{k}}_Y h(\theta_{k})
\nonumber \\ &
+ 2  \sum_{n = k-{m_{k}}+1}^{k-1} (k-n)\nr{w_n}_Y h(\theta_n) 
+ m_{k}\nr{w_{k-m_{k}}}_Yh(\theta_{k-m_{k}})
\bigg) \Bigg),
\end{align}
where  each $h(\theta_j) \le C \sqrt{1 - \theta_j^2} + \beta_{j}\theta_j$,
and $C$ depends on $c_s$ and the implied upper bound on the direction cosines. 
\end{theorem}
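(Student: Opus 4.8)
The plan is to follow the residual-to-residual analysis of \cite{PR21}: track how a single Anderson step maps $w_k$ to $w_{k+1}$ through a second-order Taylor expansion of $g$, splitting the result into a first-order part that the optimization \eqref{eqn:opt-v0} shrinks by the gain $\theta_k$ and a remainder that is quadratically small in the residuals. First I would introduce the averaged quantities produced at step $k$. With the optimal coefficients $\{\alpha_j^k\}$, set
\begin{align*}
\xa_k &:= \Big(1-\sum_{j=1}^{m_k}\alpha_j^k\Big)x_{k-1} + \sum_{j=1}^{m_k}\alpha_j^k\, x_{k-j-1}, \\
\wa_k &:= \Big(1-\sum_{j=1}^{m_k}\alpha_j^k\Big)w_k + \sum_{j=1}^{m_k}\alpha_j^k\, w_{k-j},
\end{align*}
so that the update \eqref{eqn:update-v0} reads exactly $x_k = \xa_k + \beta_k \wa_k$ and, by \eqref{eqn:thetadef}, $\nr{\wa_k}_Y = \theta_k\nr{w_k}_Y$. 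Since $w_i = g(x_{i-1})-x_{i-1}$, taking this same affine combination of the identities $g(x_{k-1}) = x_{k-1}+w_k$ and $g(x_{k-j-1}) = x_{k-j-1}+w_{k-j}$ yields the exact relation $\big(1-\sum_j\alpha_j^k\big)g(x_{k-1}) + \sum_j\alpha_j^k\, g(x_{k-j-1}) = \xa_k + \wa_k$.

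Next I would Taylor expand. Because $\xa_k$ is an affine (unit-sum) combination of the window iterates, expanding each $g$-value about $\xa_k$ makes the first-order terms cancel, and the Lipschitz bound $\nr{g'(x)-g'(y)}_Y \le C_1\nr{x-y}_Y$ of Assumption \ref{assume:g} controls the remainders; combined with the exact relation above this gives $g(\xa_k) = \xa_k + \wa_k + R_k$, where $R_k$ is a sum of second-order Taylor remainders, each bounded by $\tfrac{C_1}{2}$ times the squared distance of a window iterate to $\xa_k$. A second expansion of $g(\xa_k + \beta_k\wa_k)$ about $\xa_k$ gives $g(x_k) = g(\xa_k) + \beta_k g'(\xa_k)\wa_k + O(\beta_k^2\nr{\wa_k}_Y^2)$. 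Subtracting $x_k = \xa_k+\beta_k\wa_k$ and collecting terms produces $w_{k+1} = (1-\beta_k)\wa_k + \beta_k g'(\xa_k)\wa_k + R_k + O(\beta_k^2\nr{\wa_k}_Y^2)$. Taking the $Y$-norm and using $\nr{g'(\xa_k)}_Y\le C_0$ together with $\nr{\wa_k}_Y=\theta_k\nr{w_k}_Y$, the first two terms give precisely the leading factor $\theta_k\big((1-\beta_k)+C_0\beta_k\big)\nr{w_k}_Y$ appearing in \eqref{eqn:genm}.

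The hard part is bounding $R_k$ (and the $O(\beta_k^2)$ term) in the weighted form of \eqref{eqn:genm}, and this is where the geometry of the least-squares problem and Assumption \ref{assume:fg} enter. The key observation is that the correction $\xa_k - x_{k-1}$ is the least-squares projection of $w_k$ onto the column space of the difference matrix $F_k$ from \eqref{mtxF}, so distances of the form $\nr{\xa_k - x_{k-1}}$ scale like $\nr{w_k}_Y\sqrt{1-\theta_k^2}$; this is the origin of the $\sqrt{1-\theta_k^2}$ prefactor. The direction-sine hypothesis $|\sin(\cdots)|\ge c_s$ bounds the individual coefficients $\alpha_j^k$ (equivalently, bounds the pseudoinverse $\nr{F_k^\dagger}$), and Assumption \ref{assume:fg} converts the iterate differences inside $R_k$ into residual differences $\nr{w_{n}-w_{n-1}}_Y$, hence into the residual norms $\nr{w_n}_Y$ over the window. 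Carefully distributing the squared distances across the indices $n=k-m_k,\dots,k$, and tracking that each carries a factor governed by $h(\theta_n)\le C\sqrt{1-\theta_n^2}+\beta_n\theta_n$, yields the weighted sum with integer weights $(k-n)$ in \eqref{eqn:genm}.

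I expect the principal obstacle to be exactly this last bookkeeping: showing, with constants depending only on $c_s$ and $\sigma$, that every second-order contribution can be attributed to a single window residual $\nr{w_n}_Y$ with the stated weight and $h(\theta_n)$ factor, while preserving the overall prefactor $\tfrac{C C_1\sqrt{1-\theta_k^2}}{2}$. The delicate point is the dual role of $\theta_k$: it simultaneously shrinks the first-order term through the gain and, via $\sqrt{1-\theta_k^2}$, controls the size of the second-order correction, so these two effects must be estimated jointly rather than bounded independently.
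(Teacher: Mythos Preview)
The paper does not prove this theorem at all: it is quoted verbatim as a result from \cite{PR21} (note the attribution ``Pollock et al., 2021'' in the theorem header), and the text immediately following the statement is commentary on what the estimate \eqref{eqn:genm} means, not a proof. So there is no ``paper's own proof'' to compare your proposal against.

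That said, your sketch is a reasonable outline of the argument in \cite{PR21}: the decomposition $x_k=\xa_k+\beta_k\wa_k$, the identity $g(\xa_k)=\xa_k+\wa_k+R_k$ obtained by affinely combining $g(x_{k-j-1})=x_{k-j-1}+w_{k-j}$ and Taylor expanding about $\xa_k$, and the identification of the leading term $\theta_k((1-\beta_k)+C_0\beta_k)\nr{w_k}_Y$ are all correct. Your description of how $\sqrt{1-\theta_k^2}$, the direction-sine hypothesis, and Assumption~\ref{assume:fg} enter the remainder estimate is also on target. The one place where your outline is vague is exactly where you flag it: attributing each second-order piece to a specific $\nr{w_n}_Y h(\theta_n)$ with the integer weights $(k-n)$ requires telescoping the window differences $x_{k-1}-x_{k-j-1}$ into sums of consecutive steps and then invoking the same decomposition at earlier indices, which is where the factors $h(\theta_n)$ arise; this recursion is the actual content of the bookkeeping, and your sketch does not yet show it.
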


The estimate \eqref{eqn:genm} shows how the relative contributions from 
the lower and higher order terms are determined by the gain factor $\theta_k$:
the lower order terms are scaled by $\theta_k$ and the higher-order terms by $\sqrt{1 - \theta_k^2}$. The estimate reveals that while larger choices of $m$ generally provide lower $\theta_k$'s which reduces the lower order contributions to the residual, it also incurs a cost of both increased accumulation and weight of higher order terms. If recent residuals are small then greater algorithmic depths $m$ may be advantageous, but if not, large $m$ may slow or prevent convergence. As discussed in \cite{PR21}, this suggests that depth selection strategies that use small $m$ early in the iteration and large $m$ later may be advantageous in some settings.

This result supposes the sufficient linear independence of the columns of each matrix 
$F_j$ given by \eqref{mtxF}.  As discussed in \cite{PR21}, this assumption can be both verified and ensured, so
long as the optimization problem is solved in a norm 
induced by an inner-product.  One can safeguard 
by sufficiently reducing $m$ or by removing columns of $F_j$ where the desired inequality fails to hold, as demonstrated in \cite{PR21}.

\section{The iterated penalty Picard method and associated solution operator properties}
This section presents some properties of the IPP iteration and its associated fixed point function.
\subsection{Iterated penalty Picard method}
This subsection studies some properties of IPP method. We begin by defining its associated fixed point operator.
\begin{dfn} We define a mapping $G:(X_h,Q_h)\to (X_h,Q_h), \ G(u,p) = (G_1(u,p), G_2(u,p))$ such that for any $(v,q)\in (X_h,Q_h)$
	\begin{equation}\label{ipm2}
	\begin{aligned}
	\nu(\nabla G_1(u,p),\nabla v)+b^*( u, G_1(u,p), v)-(G_2(u,p),\nabla\cdot v)&=( f, v),\\
	\varepsilon(G_2(u,p),q)+(\nabla\cdot G_1(u,p),q) &=\varepsilon(p,q).
	\end{aligned}
	\end{equation}
\end{dfn}

Thus the IPP method for solving steady NSE can be rewritten now as follows.
\begin{alg}\label{alg:ipm}
	The iterated penalty method for solving steady NSE is
	\begin{enumerate}
		\item[Step 0] Guess $(u_0,p_0)\in (X_h,Q_h)$.
		\item[Step $k$] Find $(u_{k+1}, p_{k+1}) = G(u_k,p_k)$.
	\end{enumerate}
\end{alg}


We now show that $G$ is well-defined, and will then prove smoothness properties for it.

\begin{lemma}
	\label{lemma:uniqueness}
	The operator $G$ is well defined. Moreover,
	\begin{align}
	\label{utildebd}
	\|\nabla G_1(u,p)\| \le \nu^{-1}\|f\|_{-1} + \sqrt{\frac{\epsilon}\nu} \|p\|,
	\end{align} 
	for any $(u,p)\in (X_h,Q_h).$
\end{lemma}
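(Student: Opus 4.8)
The plan is to establish well-definedness through uniqueness and then derive the bound \eqref{utildebd} by a single energy argument, in both cases testing the system \eqref{ipm2} against its own solution. Throughout write $\tilde u = G_1(u,p)$ and $\tilde p = G_2(u,p)$. Since $(X_h,Q_h)$ is finite dimensional and, for fixed $u$, the map $(\tilde u,\tilde p)\mapsto$ (the left-hand sides of \eqref{ipm2}) is linear, the square system \eqref{ipm2} has a unique solution for every right-hand side if and only if the homogeneous system (data $f=0$, $p=0$) has only the trivial solution.

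For uniqueness I would set $f=0$, $p=0$ in \eqref{ipm2}, take $v=\tilde u$ in the first equation and $q=\tilde p$ in the second. The trilinear term drops out since $b^*(u,\tilde u,\tilde u)=0$ by skew-symmetry of $b^*$, and adding the two tested equations cancels the indefinite coupling term $(\tilde p,\nabla\cdot\tilde u)$. What remains is exactly
\[
\nu\|\nabla\tilde u\|^2 + \epsilon\|\tilde p\|^2 = \|(\tilde u,\tilde p)\|_X^2 = 0,
\]
in the natural norm \eqref{eqn:norm}. As $\nu,\epsilon>0$, this forces $\nabla\tilde u = 0$ and $\tilde p = 0$, and then $\tilde u = 0$ by the Poincar\'e inequality; hence uniqueness, and therefore existence, holds and $G$ is well defined.

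For the bound I would repeat the computation while keeping the data. Testing the first equation of \eqref{ipm2} with $v=\tilde u$ and the second with $q=\tilde p$, again using $b^*(u,\tilde u,\tilde u)=0$ and adding to eliminate $(\tilde p,\nabla\cdot\tilde u)$, yields the energy identity
\[
\nu\|\nabla\tilde u\|^2 + \epsilon\|\tilde p\|^2 = (f,\tilde u) + \epsilon(p,\tilde p).
\]
I would bound the right-hand side by $(f,\tilde u)\le\|f\|_{-1}\|\nabla\tilde u\|$ and $\epsilon(p,\tilde p)\le\epsilon\|p\|\|\tilde p\|$, then regroup these as the factors $\sqrt\nu\,\|\nabla\tilde u\|$ and $\sqrt\epsilon\,\|\tilde p\|$ times $\nu^{-1/2}\|f\|_{-1}$ and $\sqrt\epsilon\,\|p\|$, and apply the Cauchy--Schwarz inequality in $\mathbb{R}^2$. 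This bounds the right-hand side by $\bigl(\nu^{-1}\|f\|_{-1}^2 + \epsilon\|p\|^2\bigr)^{1/2}\,\|(\tilde u,\tilde p)\|_X$. Dividing by $\|(\tilde u,\tilde p)\|_X$ gives $\|(\tilde u,\tilde p)\|_X^2 \le \nu^{-1}\|f\|_{-1}^2 + \epsilon\|p\|^2$, so in particular $\nu\|\nabla\tilde u\|^2 \le \nu^{-1}\|f\|_{-1}^2 + \epsilon\|p\|^2$.

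The final step converts this to the stated additive form: dividing by $\nu$ gives $\|\nabla\tilde u\|^2 \le (\nu^{-1}\|f\|_{-1})^2 + (\sqrt{\epsilon/\nu}\,\|p\|)^2$, and the elementary inequality $\sqrt{x^2+y^2}\le x+y$ for $x,y\ge 0$ produces \eqref{utildebd}. I expect no conceptual obstacle here; the single genuine idea is that testing with the solution and summing the two equations exactly cancels the indefinite (saddle-point) coupling term, reducing an indefinite problem to a coercive energy identity in the norm $\|(\cdot,\cdot)\|_X$, after which everything is Cauchy--Schwarz bookkeeping. The only point requiring mild care is tracking the $\nu$ and $\epsilon$ weights so that the Cauchy--Schwarz split produces precisely the factors $\nu^{-1}\|f\|_{-1}$ and $\sqrt{\epsilon/\nu}\,\|p\|$.
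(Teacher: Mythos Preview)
Your proof is correct and essentially identical to the paper's: both establish well-posedness via the trivial-kernel argument for the finite-dimensional linear system, then test \eqref{ipm2} with $(v,q)=(G_1(u,p),G_2(u,p))$ to obtain the energy identity $\nu\|\nabla G_1(u,p)\|^2+\epsilon\|G_2(u,p)\|^2=(f,G_1(u,p))+\epsilon(p,G_2(u,p))$, bound the right-hand side, and finish with $\sqrt{x^2+y^2}\le x+y$. The only cosmetic difference is that you use Cauchy--Schwarz in $\mathbb{R}^2$ where the paper uses Young's inequality, but both yield the same intermediate bound $\|G(u,p)\|_X^2\le \nu^{-1}\|f\|_{-1}^2+\epsilon\|p\|^2$.
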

\begin{proof}
	Given $f,u,p$, assume $(u_1,p_1),(u_2,p_2)\in (X_h,Q_h)$ are solutions to \eqref{ipm2}. Subtracting these two systems and letting $e_u=u_1-u_2$ and $e_p=p_1-p_2$ produces
	\begin{align*}
	\nu(\nabla e_u,\nabla v)+b^*( u, e_u , v)-(e_p ,\nabla\cdot v)&=0,\\
	\varepsilon(e_p,q)+(\nabla\cdot e_u,q) &=0.
	\end{align*}
	Setting $v=e_u$ and $q=e_p$, and adding these equations gives
	\begin{align*}
	\nu \|\nabla e_u\|^2 +\varepsilon\|e_p\|^2=0,
	\end{align*}
	which is satisfied if $e_u=e_p=0$ implying the solution of \eqref{ipm2} is unique.  Because \eqref{ipm2} is linear and finite dimensional, solutions must exist uniquely.  Choosing $v=G_1(u,p)$ and $q=G_2(u,p)$ in \eqref{ipm2} produces
	\begin{align*}
	\| G(u,p)\|_X^2 =\nu\|\nabla G_1(u,p)\|^2+ \epsilon\| G_2(u,p)\|^2 &\le \epsilon\| p\|^2 + \nu^{-1}\|f\|_{-1}^2, 
	\end{align*}
	thanks to Cauchy-Schwarz and Young's inequalities. 
	This shows the solution $G(u,p)$ is bounded continuously by the data, proving \eqref{ipm2} is well-posed and thus $G$ is well-defined.
	Additionally, dropping the term $\|G_2(u,p)\|^2$ and taking square root yields \eqref{utildebd}.
	
\end{proof}

\begin{lemma}
	\label{lemma:conv1}
	Under Assumption \ref{assume:kappa}, let $(u,p)$ be the solution of \eqref{weaknese} and $(u_k, p_{k})$ be $k^{th}$ iteration from Algorithm \ref{alg:ipm}.  Then we have
	\begin{align}
	\label{eqn:conv1}
	\|(u_{k+1}, p_{k+1}) - (u,p)\|_X <  \|(u_k,p_k) - (u, p)\|_X.
	\end{align}
\end{lemma}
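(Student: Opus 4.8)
The plan is to identify the exact discrete solution $(u,p)$ of \eqref{weaknese} as a fixed point of the map $G$ from \eqref{ipm2}, and then to control the error in the norm $\|\cdot\|_X$ of \eqref{eqn:norm}. First I would verify that $(u,p)=G(u,p)$: with $(u,p)$ substituted for $(G_1,G_2)$, the first line of \eqref{ipm2} is exactly the momentum equation of \eqref{weaknese}, while the incompressibility constraint $(\nabla\cdot u,q)=0$ makes the second line collapse to $\varepsilon(p,q)=\varepsilon(p,q)$. Writing $e_u^{k+1}=u_{k+1}-u$ and $e_p^{k+1}=p_{k+1}-p$ and subtracting these fixed-point relations from the equations defining $(u_{k+1},p_{k+1})=G(u_k,p_k)$ then yields a linear error system.

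The key manipulation is to expand the nonlinear difference $b^*(u_k,u_{k+1},v)-b^*(u,u,v)$ by trilinearity as $b^*(u,e_u^{k+1},v)+b^*(e_u^k,u,v)+b^*(e_u^k,e_u^{k+1},v)$, and then to test the momentum error equation with $v=e_u^{k+1}$ and the continuity error equation with $q=e_p^{k+1}$. Two convenient things happen: the skew-symmetry of $b^*$ annihilates the terms $b^*(u,e_u^{k+1},e_u^{k+1})$ and $b^*(e_u^k,e_u^{k+1},e_u^{k+1})$, and the mixed pressure-divergence couplings $-(e_p^{k+1},\nabla\cdot e_u^{k+1})$ and $+(\nabla\cdot e_u^{k+1},e_p^{k+1})$ cancel when the two tested equations are added. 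What survives is the clean identity $\nu\|\nabla e_u^{k+1}\|^2+\varepsilon\|e_p^{k+1}\|^2 = \varepsilon(e_p^k,e_p^{k+1}) - b^*(e_u^k,u,e_u^{k+1})$, i.e. $\|(e_u^{k+1},e_p^{k+1})\|_X^2$ expressed through the previous error.

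To finish I would bound the right-hand side. The nonlinear term is controlled by \eqref{skewsymmest} together with the a priori bound $\|\nabla u\|\le\nu^{-1}\|f\|_{-1}$, giving $|b^*(e_u^k,u,e_u^{k+1})|\le \kappa\,\nu\,\|\nabla e_u^k\|\,\|\nabla e_u^{k+1}\|$ since $\kappa=\nu^{-2}M\|f\|_{-1}$; the pressure term is bounded by Cauchy--Schwarz as $\varepsilon\|e_p^k\|\,\|e_p^{k+1}\|$. Abbreviating $a_j=\sqrt{\nu}\,\|\nabla e_u^j\|$ and $b_j=\sqrt{\varepsilon}\,\|e_p^j\|$, this reads $a_{k+1}^2+b_{k+1}^2\le \kappa\,a_k a_{k+1}+b_k b_{k+1}$, and a Cauchy--Schwarz step in $\mathbb{R}^2$ applied to the vectors $(\kappa a_k,b_k)$ and $(a_{k+1},b_{k+1})$ gives $\|(e_u^{k+1},e_p^{k+1})\|_X^2 \le \kappa^2\nu\|\nabla e_u^k\|^2+\varepsilon\|e_p^k\|^2$. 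Since $\kappa<1$ by Assumption \ref{assume:kappa}, the right side is strictly below $\nu\|\nabla e_u^k\|^2+\varepsilon\|e_p^k\|^2=\|(e_u^k,e_p^k)\|_X^2$ whenever $e_u^k\neq 0$, which is \eqref{eqn:conv1}.

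The chief subtlety, and the step I expect to require extra care, is the degenerate case in which the velocity error has already vanished ($e_u^k=0$) while the pressure error has not: there the contraction factor $\kappa^2$ multiplies nothing and the previous estimate only delivers $\le$. I would treat this case separately, noting that with $e_u^k=0$ the identity reduces to $a_{k+1}^2+b_{k+1}^2=b_k b_{k+1}$, and that $e_u^{k+1}=0$ would force, via the momentum error equation and $Q_h=\nabla\cdot X_h$, that $e_p^{k+1}=0$, contradicting $e_p^k\neq 0$; hence $a_{k+1}>0$, so $b_{k+1}<b_k$ and the inequality is again strict. I would also record the tacit hypothesis that the iterate has not already coincided with the solution, so that $\|(e_u^k,e_p^k)\|_X>0$.
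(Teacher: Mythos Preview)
Your proposal is correct and follows essentially the same route as the paper: subtract the fixed-point relation for $(u,p)$ from the one for $(u_{k+1},p_{k+1})$, test with the errors, exploit skew-symmetry of $b^*$ and the bound \eqref{skewsymmest} together with $\|\nabla u\|\le \nu^{-1}\|f\|_{-1}$, and arrive at
\[
\nu\|\nabla e_u^{k+1}\|^2+\varepsilon\|e_p^{k+1}\|^2 \le \kappa^2\,\nu\|\nabla e_u^{k}\|^2+\varepsilon\|e_p^{k}\|^2.
\]
The only cosmetic difference is that the paper obtains this last display via Young's inequality on each product, whereas you use Cauchy--Schwarz in $\mathbb{R}^2$ on the pair $(\kappa a_k,b_k)\cdot(a_{k+1},b_{k+1})$; both give the identical bound. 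Your additional treatment of the degenerate case $e_u^k=0$, $e_p^k\neq 0$ (using $Q_h=\nabla\cdot X_h$ to force $e_p^{k+1}=0$ from $e_u^{k+1}=0$, and then the continuity error equation to contradict $e_p^k\neq 0$) is a genuine refinement: the paper's argument, as written, only yields $\le$ in that case, so your extra paragraph actually closes a small gap the paper leaves open for the \emph{strict} inequality claimed in \eqref{eqn:conv1}.
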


\begin{proof}
	Subtracting equations \eqref{weaknese} from \eqref{ipm2} with $(u_{k+1}, p_{k+1})$ gives
	\begin{align}
	\label{e1}
	\nu (\nabla (u_{k+1} - u),\nabla v) + b^*(u_k, u_{k+1} - u, v) + b^*(u_k - u, u,v) \nonumber\\
	-(p_{k+1} - p,\nabla \cdot v) =& 0,\\
	\label{e2}
	\epsilon(p_{k+1} - p, q) + (\nabla \cdot (u_{k+1} - u),q) =& \epsilon(p_k - p,q).
	\end{align}
	Adding these equations together and setting $v = u_{k+1} - u, q = p_{k+1} - p$ produces
	\begin{multline*}
	\nu \| \nabla (u_{k+1} - u)\|^2 +\epsilon \| p_{k+1} - p\|^2 \\ \le M \|\nabla (u_k - u)\| \|\nabla u\| \| \nabla (u_{k+1} - u)\| + \epsilon \| p_{k}-p\| \|p_{k+1}-p\|,
	\end{multline*}
	thanks to \eqref{skewsymmest} and Cauchy-Schwarz inequality. Then, using  $\|\nabla u\|\le \nu^{-1}\|f\|_{-1}, $ and Young's inequality gives
	\begin{align*}
	\nu \| \nabla (u_{k+1} - u)\|^2 + \epsilon \| p_{k+1} - p\|^2 
	\le 
	\nu \kappa^{2} \|\nabla (u_k - u)\|^2
	+
	\epsilon \| p_{k}-p\|^2,
	\end{align*}
	where $\kappa:=M\nu^{-2}\|f\|_{-1}$. Thanks to the Assumption \ref{assume:kappa} and taking the square root on both sides gives \eqref{eqn:conv1}.
\end{proof}

Lemma \ref{lemma:conv1} shows us that Algorithm \ref{alg:ipm} converges when the small data condition $\kappa <1$ is satisfied. However, it tells us nothing when $\kappa \ge1$.  With Anderson acceleration, we can discuss the convergence behavior $\kappa \ge 1$, see Theorem \ref{thm:aa}.  Next, we show the solution operator $G$ is Lipschitz continuous and Fr\'echet differentiable.
\begin{lemma}\label{LipschitzG}
	For any $( u,p),\ ( w,z)\in (X_h, Q_h)$, we have
	\begin{align}
	\label{eqn:lip}
	\|G( u, p) - G( w, z)\|_X\leq C_L\|( u, p) - ( w, z)\|_X,
	\end{align}  
	where 
	$ C_L = \max \{1,\kappa + \sqrt{\epsilon/\nu^3}M\| p\| \}$. 
\end{lemma}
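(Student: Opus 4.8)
The plan is to write down the two defining systems \eqref{ipm2}, one for $G(u,p)=(G_1(u,p),G_2(u,p))$ and one for $G(w,z)=(G_1(w,z),G_2(w,z))$, subtract them, and run an energy argument in the spirit of Lemma \ref{lemma:conv1}. Setting $\Phi := G_1(u,p)-G_1(w,z)$ and $\Psi := G_2(u,p)-G_2(w,z)$, the only nontrivial piece of the subtracted velocity equation is the difference of trilinear terms $b^*(u,G_1(u,p),v)-b^*(w,G_1(w,z),v)$, and the crucial decision is how to split it. I would use
\[
b^*(u,G_1(u,p),v)-b^*(w,G_1(w,z),v)=b^*(u-w,G_1(u,p),v)+b^*(w,\Phi,v),
\]
because this retains $G_1(u,p)$ (whose gradient is controlled by \eqref{utildebd} in terms of $\|p\|$) in the first term, which is exactly what produces the $\|p\|$-dependence of $C_L$, while the second term vanishes on the diagonal. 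The subtracted pressure equation is simply $\epsilon(\Psi,q)+(\nabla\cdot\Phi,q)=\epsilon(p-z,q)$.

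Next I would test the velocity difference equation with $v=\Phi$ and the pressure difference equation with $q=\Psi$, then add. Skew-symmetry gives $b^*(w,\Phi,\Phi)=0$, and the two mixed terms $-(\Psi,\nabla\cdot\Phi)$ and $(\nabla\cdot\Phi,\Psi)$ cancel exactly, leaving the energy identity
\[
\nu\|\nabla\Phi\|^2+\epsilon\|\Psi\|^2=\epsilon(p-z,\Psi)-b^*(u-w,G_1(u,p),\Phi).
\]
The pressure term is handled by Cauchy--Schwarz and Young. For the trilinear term I would apply \eqref{skewsymmest} and then the bound $\|\nabla G_1(u,p)\|\le \nu^{-1}\|f\|_{-1}+\sqrt{\epsilon/\nu}\,\|p\|$ from Lemma \ref{lemma:uniqueness}. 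Recognizing that $M\nu^{-1}\|f\|_{-1}=\nu\kappa$ and $\nu^{-1}\sqrt{\epsilon/\nu}=\sqrt{\epsilon/\nu^3}$, this produces a factor $\nu(\kappa+\sqrt{\epsilon/\nu^3}\,M\|p\|)$ multiplying $\|\nabla(u-w)\|\,\|\nabla\Phi\|$, which is precisely $\nu$ times the second entry of the $\max$ defining $C_L$.

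Finally, writing $\tilde C:=\kappa+\sqrt{\epsilon/\nu^3}\,M\|p\|$ and applying Young's inequality with weights chosen to split off $\tfrac{\nu}{2}\|\nabla\Phi\|^2$ and $\tfrac{\epsilon}{2}\|\Psi\|^2$, I would absorb these back into the left-hand side to obtain $\nu\|\nabla\Phi\|^2+\epsilon\|\Psi\|^2\le \nu\tilde C^2\|\nabla(u-w)\|^2+\epsilon\|p-z\|^2$. A short case distinction then closes the estimate: if $\tilde C\le 1$ the pressure coefficient $1$ dominates and $C_L=1$ works, whereas if $\tilde C>1$ then $\tilde C^2\ge 1$ also bounds the pressure coefficient, so $C_L=\tilde C$ works; in both cases the right side is at most $C_L^2\|(u,p)-(w,z)\|_X^2$, and taking square roots yields \eqref{eqn:lip}. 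I expect the main obstacle to be pinning down the constant exactly: one must choose the splitting so that $G_1(u,p)$ rather than $G_1(w,z)$ appears, apply Young's inequality with the $\nu$- and $\epsilon$-weighted terms so that absorption is clean, and invoke the $\max\{1,\cdot\}$ structure to accommodate the pressure term, whose natural coefficient is $1$ rather than $\tilde C^2$.
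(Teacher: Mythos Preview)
Your proof is correct and follows essentially the same energy argument as the paper: subtract the two systems, test with the differences, use skew-symmetry to kill one trilinear term, and apply Young's inequality together with \eqref{utildebd}. The only difference is the trilinear splitting --- you retain $G_1(u,p)$ in the surviving term while the paper retains $G_1(w,z)$ --- and in fact your choice matches the stated $\|p\|$-dependent constant $C_L$ exactly, whereas the paper's splitting literally produces $\|z\|$ in place of $\|p\|$ after invoking \eqref{utildebd}.
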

	\begin{remark}
		Equation \eqref{eqn:lip} tells us that Algorithm \ref{alg:ipm} converges linearly with rate $C_L,$ which may be larger than the usual Picard method's rate of $\kappa$ \cite{GR86}. This is not surprising, since (until now) IPP would never be used with small $\epsilon$.  In \cite{C93}, for example, a smaller $C_L$ is found for IPP, but small $\epsilon$ is assumed as is an inf-sup compatibility condition on the discrete spaces.  We will show in section 4 that when IPP is enhanced with AA, the effective linear convergence rate will be much smaller than $C_L$ even when $\epsilon=1$ and without an assumption of an inf-sup condition, and the resulting solver is demonstrated to be very effective in section 5.
	\end{remark}
\begin{proof}
	From \eqref{ipm2} with $(u,p)$ and $(w,z)$, we obtain
	\begin{align*}
	\nu(\nabla( G_1( u,p)- G_1( w,z)),\nabla v)
	+ b^*( u, G_1( u,p)- G_1( w,z), v)
	& \\
	+
	b^*( u- w, G_1( w,z), v)
	-
	( G_2( u,p)- G_2( w,z),\nabla\cdot v)
	&=0, \\*
	\varepsilon( G_2( u,p)- G_2( w,z),q)+(q,\nabla\cdot( G_1( u,p)- G_1( w,z)))&=\varepsilon(p-z,q).
	\end{align*}
	Adding these equations and choosing $ v= G_1( u,p)- G_1( w,z)$ and $q= G_2( u,p)- G_2( w,z)$ yields
	\begin{align*}
	&\nu\|\nabla( G_1( u,p)- G_1( w,z))\|^2+\varepsilon\| G_2( u,p)- G_2( w,z)\|^2
	\\
	&= \varepsilon(p-z, G_2( u,p)- G_2( w,z))-b^*( u- w, G_1( w,z), G_1( u,p)- G_1( w,z)) \\
	&\le  \epsilon \| p-z\| \| G_2( u,p)- G_2( w,z)\|  
	\\
	&+ M \| \nabla (u-w)\| \| \nabla (G_1(w,z))\| \|  \nabla (G_1( u,p)- G_1( w,z))\| ,
	\end{align*}
	thanks to Cauchy-Schwarz and \eqref{skewsymmest}.
	Applying Young's inequality provides
	\begin{multline*}
	\nu\|\nabla( G_1( u,p)- G_1( w,z))\|^2+\varepsilon\| G_2( u,p)- G_2( w,z)\|^2
	\\ \leq 
	\varepsilon \|p-z\|^2
	+
	\nu^{-1}M^2\|\nabla G_1(w,z)\|^2 \|\nabla( u- w)\|^2,
	\end{multline*}
	which reduces to \eqref{eqn:lip} due to \eqref{eqn:norm} and \eqref{utildebd}.
\end{proof}
Next, we define an operator $G'$ and then show that $G'$ is  the Fr\'echet derivative of operator $G$. 

\begin{definition}\label{defineG'}
	Given $( u,p)\in(X_h,Q_h)$, define an operator $G'( u,p;\cdot,\cdot):(X_h,Q_h)\rightarrow(X_h,Q_h)$ by
	$$G'( u,p; h,s):=(G'_1( u,p; h,s), G'_2( u,p; h,s))$$
	satisfying for all $( h,s)\in(X_h,Q_h)$ 
	\begin{equation}
	\begin{aligned}\label{diffG}
	\nu(\nabla G'_1( u,p; h,s),\nabla v)
	+
	b^*( h,G_1( u,p), v)
	+
	b^*( u,G'_1( u,p; h,s), &v)
	\\-
	( G'_2( u,p; h,s),\nabla\cdot v)
	=&
	0,
	\\
	\varepsilon( G'_2( u,p; h,s),q)
	+
	(q,\nabla\cdot G'_1( u,p; h,s))
	=&
	\varepsilon(s,q).
	\end{aligned}
	\end{equation}
\end{definition}
\begin{lemma} \label{lemma:wellG'} $G'$ is well-defined and is the Fr\'echet derivative of operator $G$ satisfying
	\begin{align}\label{wellG'}
	\|G'( u,p; h,s)\|_X
	\leq
	C_L
	\|(h,s)\|_X.
	\end{align}
	and
	\begin{align}\label{LipschitzG'}
	\|G'( u+ h,p+s; w,z)-G'( u,p; w,z)\|_X
	\leq
	\hat{C}_L \|( w,z)\|_X \|( h,s)\|_X 
	\end{align}
	where $C_L$ is defined in Lemma \ref{LipschitzG} and $\hat C_L = \sqrt{10} \nu^{-3/2}MC_L$.
\end{lemma}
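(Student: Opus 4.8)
The plan is to prove the four assertions in sequence — well-definedness of $G'$, the a priori bound \eqref{wellG'}, Fr\'echet differentiability, and the Lipschitz estimate \eqref{LipschitzG'} — all by the same mechanism used in Lemmas \ref{lemma:uniqueness} and \ref{LipschitzG}: test a linear saddle-type system with its own solution, exploit the skew-symmetry identity $b^*(u,\chi,\chi)=0$ to annihilate the convection term carrying the base iterate, and use the cancellation of the pressure--divergence coupling between the momentum and continuity equations.

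First, since \eqref{diffG} is a linear finite-dimensional system in $(G'_1,G'_2)$, existence is equivalent to uniqueness, so for well-definedness I would set the data $(h,s)=(0,0)$, test the momentum equation with $v=G'_1$ and the continuity equation with $q=G'_2$, add, and use $b^*(u,G'_1,G'_1)=0$ together with the coupling cancellation to obtain $\nu\|\nabla G'_1\|^2+\epsilon\|G'_2\|^2=0$, forcing the trivial solution. For \eqref{wellG'} I would test \eqref{diffG} with $v=G'_1(u,p;h,s)$ and $q=G'_2(u,p;h,s)$ and add, giving $\nu\|\nabla G'_1\|^2+\epsilon\|G'_2\|^2=\epsilon(s,G'_2)-b^*(h,G_1(u,p),G'_1)$; bounding the right-hand side via Cauchy--Schwarz and \eqref{skewsymmest} and absorbing $\tfrac{\nu}{2}\|\nabla G'_1\|^2$ and $\tfrac{\epsilon}{2}\|G'_2\|^2$ by Young's inequality yields $\nu\|\nabla G'_1\|^2+\epsilon\|G'_2\|^2\le \epsilon\|s\|^2+\nu^{-1}M^2\|\nabla G_1(u,p)\|^2\|\nabla h\|^2$. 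The crux is to write $\nu^{-1}M^2\|\nabla G_1(u,p)\|^2=\bigl(\nu^{-1}M\|\nabla G_1(u,p)\|\bigr)^2\nu$ and observe, via \eqref{utildebd}, that $\nu^{-1}M\|\nabla G_1(u,p)\|\le \kappa+\sqrt{\epsilon/\nu^3}M\|p\|\le C_L$; since $C_L\ge1$ this gives $\|G'(u,p;h,s)\|_X^2\le C_L^2\|(h,s)\|_X^2$.

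For Fr\'echet differentiability I would form the remainder $R=(R_1,R_2):=G(u+h,p+s)-G(u,p)-G'(u,p;h,s)$ and subtract the two copies of \eqref{ipm2} (at $(u+h,p+s)$ and at $(u,p)$) and the system \eqref{diffG}. The convection difference $b^*(u+h,G_1(u+h,p+s),v)-b^*(u,G_1(u,p),v)$ splits into a $b^*(u,\cdot,v)$ piece and a $b^*(h,\cdot,v)$ piece which, after recombining with the convection terms $b^*(u,G'_1,v)+b^*(h,G_1(u,p),v)$ of \eqref{diffG}, collapse to $b^*(u,R_1,v)+b^*(h,G_1(u+h,p+s)-G_1(u,p),v)$. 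Testing with $(R_1,R_2)$ annihilates $b^*(u,R_1,R_1)$ and the coupling, leaving the single quadratic source, so $\|R\|_X^2\le M\|\nabla h\|\,\|\nabla(G_1(u+h,p+s)-G_1(u,p))\|\,\|\nabla R_1\|$. Bounding the middle factor by Lemma \ref{LipschitzG} and using $\|\nabla h\|\le\nu^{-1/2}\|(h,s)\|_X$, $\|\nabla R_1\|\le\nu^{-1/2}\|R\|_X$ produces the quadratic estimate $\|R\|_X\le M\nu^{-3/2}C_L\|(h,s)\|_X^2=o(\|(h,s)\|_X)$, which identifies $G'$ as the Fr\'echet derivative.

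The Lipschitz estimate \eqref{LipschitzG'} is the technical heart and proceeds analogously. I would set $D=(D_1,D_2):=G'(u+h,p+s;w,z)-G'(u,p;w,z)$ and subtract the two instances of \eqref{diffG}. The convection difference again splits so that the $b^*(u,D_1,\cdot)$ part vanishes on testing, leaving the two cross terms $b^*(w,G_1(u+h,p+s)-G_1(u,p),D_1)$ and $b^*(h,G'_1(u+h,p+s;w,z),D_1)$. Testing with $(D_1,D_2)$ gives $\nu\|\nabla D_1\|^2+\epsilon\|D_2\|^2\le M\|\nabla w\|\,\|\nabla(G_1(u+h,p+s)-G_1(u,p))\|\,\|\nabla D_1\|+M\|\nabla h\|\,\|\nabla G'_1(u+h,p+s;w,z)\|\,\|\nabla D_1\|$; the first middle factor is controlled by Lemma \ref{LipschitzG} and the second by the already-proved bound \eqref{wellG'}, after which $\|\nabla w\|,\|\nabla h\|\le\nu^{-1/2}\|\cdot\|_X$ and Young's inequality deliver $\|D\|_X\le \hat C_L\|(w,z)\|_X\|(h,s)\|_X$. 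I expect the only real obstacle to be bookkeeping rather than ideas: correctly isolating exactly the two nonvanishing trilinear terms after the convection splitting, and carefully tracking the powers of $\nu$ and the factor $C_L$ through the Young step to land on the stated constant $\hat C_L=\sqrt{10}\,\nu^{-3/2}MC_L$.
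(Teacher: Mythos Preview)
Your proposal is correct and mirrors the paper's proof in structure and technique: the same energy testing, skew-symmetry cancellation, and Cauchy--Schwarz/Young estimates in all three parts. The only minor deviation is in the Lipschitz step, where the paper splits the convection difference as $b^*(u+h,e_1,\cdot)+b^*(h,G'_1(u,p;w,z),\cdot)$ (so that \eqref{wellG'} is applied at the base point $(u,p)$, giving exactly the stated $C_L$ that depends on $\|p\|$), whereas your alternative splitting leaves $G'_1$ evaluated at $(u+h,p+s)$ and would produce the constant with $\|p+s\|$ in place of $\|p\|$; this is cosmetic and easily adjusted.
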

\begin{proof}
	The proof consists of three parts. First, we show that $G'$ is well-defined and \eqref{wellG'} holds. 
	%
	%
	Adding equations in \eqref{diffG} and setting $ v=G'_1( u,p; h,s)$ and $q= G'_2( u,p; h,s)$ produces
	\begin{align*}
	\nu \|\nabla G'_1( u,p; h,s)\|^2
	+
	\varepsilon \| G'_2( u,p; h,s)\|^2
	&=
	\varepsilon(s, G'_2( u,p; h,s))\\
	&-
	b^*(h,G_1( u,p),G'_1( u,p; h,s)).
	\end{align*}
	Applying Cauchy-Schwarz, (\ref{skewsymmest}) and Young's inequalities gives
	\begin{align*}
	\nu \|\nabla G'_1( u,p; h,s)\|^2
	+
	\varepsilon &\| G'_2( u,p; h,s)\|^2
	\\
	&\leq
	\varepsilon \|s\|^2
	+
	\nu^{-1}M^2  \|\nabla h\|^2 \|\nabla G_1( u,p)\|^2\\
	 &\leq
	\varepsilon \|s\|^2
	+
	\nu^{-1} M^2 (\nu^{-1}\|f\|_{-1} + \sqrt{\frac{\epsilon }{\nu}} \|p\|)^2 \|\nabla h\|^2  \\
	&\leq
		C_L^2
		(
		\varepsilon \|s\|^2
		+
		\nu \|\nabla h\|^2
		),
	\end{align*}
	thanks to  (\ref{utildebd}), which leads to (\ref{wellG'}).
	Since the system (\ref{diffG}) is linear and finite dimensional, (\ref{wellG'}) is sufficient to conclude that (\ref{diffG}) is well-posed.  
	
	The second part shows that $G'$ is the Fr\'echet derivative of $G$. Denote $ \eta_1 = G_1(u+h,p+s) - G_1(u,p) - G_1'(u,p; h,s), \eta_2 = G_2(u+h,p+s) - G_2(u,p) - G_2'(u,p;h,s).$
	Subtracting the sum of \eqref{diffG} and \eqref{ipm2} from the equation \eqref{ipm2} with $(u+h, p+s)$ yields
		\begin{align*}
		\nu (\nabla \eta_1, \nabla v) + b^*(u, \eta_1, v) + b^*(h, G_1(u+h,p+s) - G_1(u,p), v)\\ - (\eta_2, \nabla \cdot v) = 0,\\ 
		\epsilon ( \eta_2, q) + (\nabla \cdot \eta_1,q)  =0.
		\end{align*}
	
	Setting $v = \eta_1,\ q = \eta_2$ produces
	\begin{align*}
	\|G( u+ h,p+s)-G(&u,p)-G'( u,p; h,s)\|_X^2 \\ &\leq \nu^{-1}M^2 \|\nabla h\|^2 \|\nabla (G_1(u+h,p+s) - G_1(u,p))\|^2 \\
	 &\le \nu^{-3}M^2C_L^2\|( h,s)\|_X^4,
	\end{align*}
	thanks to Young's inequality and \eqref{eqn:lip}. Thus we have verified that $G'$ is the Fr\'echet derivative of $G$.
	
	Lastly, we show $G'$ is Lipschitz continuous over $(X_h,Q_h)$. For $(u,p), (h,s),$ $ (w,z) \in (X_h,Q_h)$, letting $e_1:=G'_1( u+ h,p+s; w,z)-G'_1( u,p; w,z)$, $e_2:=G'_2( u+ h,p+s; w,z)-G'_2( u,p; w,z)$, and then subtracting \eqref{ipm2} with $G'(u,p;w,z)$ from \eqref{ipm2} with $G'(u+h, p+s; w,z)$ yields
	\begin{align*}
	\nu(\nabla e_1, \nabla v) + b^*(w,G_1(u+h,p+s) - G_1(u,p), v) + b^*(h, G'_1(u,p;w,z),&v) \\
	+ b^*(u+h, e_1,v) - (e_2, \nabla \cdot v) & =0,\\
	\epsilon (e_2, q) + (\nabla \cdot e_1, q) & =0.
	\end{align*}
	Adding these equations and setting $v = e_1, q = e_2$ eliminates the fourth term and gives us
	\begin{equation*}
	\begin{aligned}
	\nu\| e_1\|^2
	+
	\varepsilon&\|e_2\|^2\\
	&=
	-b^*( w,G_1( u+ h,p+s)-G_1( u,p),e_1)
	-
	b^*( h,G'_1( u,p; w,z),e_1)  \\
	&\le  M\|\nabla w\| \|\nabla (G_1( u+ h,p+s)-G_1( u,p)\| \|\nabla e_1\|
	\\ &+ M\|\nabla h\|\|\nabla G'_1(u,p;w,z)\| \|\nabla e_1\| ,
	\end{aligned}
	\end{equation*}
	thanks to \eqref{diffG}.
	Now, applying the Young's inequality and \eqref{wellG'}, we get
	\begin{equation*}
	\begin{aligned}
	\|(e_1,e_2)\|^2_X
	&\leq 10\nu^{-3}M^2C_L^2  \|( w,z)\|^2_X \|( h,s)\|^2_X ,
	\end{aligned}
	\end{equation*}
	which implies to \eqref{LipschitzG'} after taking square roots on both sides. This finishes the proof.
\end{proof}
	Lastly, we show $G$ satisfies Assumption \ref{assume:fg}.
	\begin{lemma}
		\label{lemma1}
		The following inequality holds
		\begin{multline}
		\| \big( G(u_k, p_k) - (u_k, p_k) \big) -\big( G(u_{k-1}, p_{k-1}) - (u_{k-1}, p_{k-1}) \big)\|_X \\ \ge (1-C_L) \| (u_k ,p_k) -( u_{k-1}, p_{k-1}) \|_X ,
		\end{multline}
	\end{lemma}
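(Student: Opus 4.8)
The plan is to observe that the vector inside the norm on the left-hand side is, after regrouping, exactly the difference between the \emph{image increment} $G(u_k,p_k)-G(u_{k-1},p_{k-1})$ and the \emph{iterate increment} $(u_k,p_k)-(u_{k-1},p_{k-1})$, and then to bound this below by combining the reverse triangle inequality with the Lipschitz estimate already proved in Lemma \ref{LipschitzG}. No new system needs to be subtracted or tested against a special choice of $(v,q)$; this lemma is a direct structural consequence of the contraction-type bound for $G$.

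Concretely, I would first set $D := (u_k,p_k)-(u_{k-1},p_{k-1})$ and rewrite the argument of the norm as
\[
\big( G(u_k,p_k) - (u_k,p_k) \big) - \big( G(u_{k-1},p_{k-1}) - (u_{k-1},p_{k-1}) \big) = \big( G(u_k,p_k) - G(u_{k-1},p_{k-1}) \big) - D,
\]
since the bare iterate terms collect into $-D$. Next I would apply the reverse triangle inequality $\|a-b\|_X \ge \|a\|_X - \|b\|_X$ with $a = D$ and $b = G(u_k,p_k)-G(u_{k-1},p_{k-1})$ to get
\[
\big\| \big( G(u_k,p_k) - G(u_{k-1},p_{k-1}) \big) - D \big\|_X \ge \|D\|_X - \big\| G(u_k,p_k) - G(u_{k-1},p_{k-1}) \big\|_X.
\]
Finally, invoking Lemma \ref{LipschitzG} in the form $\| G(u_k,p_k) - G(u_{k-1},p_{k-1}) \|_X \le C_L \|D\|_X$ yields the right-hand side $\|D\|_X - C_L\|D\|_X = (1 - C_L)\|D\|_X$, which is precisely the claimed bound and identifies the constant $\sigma = 1 - C_L$ in Assumption \ref{assume:fg}.

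I do not expect any genuine obstacle: the entire argument is an application of the reverse triangle inequality on top of the previously established Lipschitz continuity of $G$. The one point worth flagging is that $C_L \ge 1$ by construction, since it is defined as a maximum with $1$ in Lemma \ref{LipschitzG}; consequently $1 - C_L \le 0$ and the stated inequality in fact also holds trivially because its left-hand side is a nonnegative norm. The reverse-triangle derivation is nonetheless the natural one, as it is exactly what connects the residual increments to the iterate increments in the sense required by Assumption \ref{assume:fg} and thereby lets the AA theory of Theorem \ref{thm:genm} be applied.
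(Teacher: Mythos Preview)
Your proposal is correct and follows essentially the same route as the paper: regroup the left-hand side as $\big(G(u_k,p_k)-G(u_{k-1},p_{k-1})\big) - D$, apply the reverse triangle inequality, and invoke the Lipschitz bound from Lemma~\ref{LipschitzG}. The paper also singles out the case $C_L\ge 1$ as trivial (left side nonnegative, right side nonpositive), exactly as you flag at the end.
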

	\begin{proof}
		If $C_L=1$, the result holds trivially.  Otherwise, $0\le C_L<1$, and from Lemma \ref{lemma:conv1}, we have 
		\begin{align*}
		& \| \big( G(u_k, p_k) - (u_k, p_k) \big) -\big( G(u_{k-1}, p_{k-1}) - (u_{k-1}, p_{k-1}) \big)\|_X \\
		&=  \| \big( G(u_k, p_k) - G(u_{k-1}, p_{k-1}) \big)- \big( (u_k,p_k) -  (u_{k-1}, p_{k-1}) \big)\|_X \\
		&\ge (1-C_L) \| (u_k,p_k) -  (u_{k-1}, p_{k-1})\|_X.
		\end{align*}
	\end{proof}

\section{The Anderson accelerated iterated penalty Picard scheme and its convergence}

Now we present the Anderson accelerated iterated penalty Picard (AAIPP) algorithm and its convergence properties.  Here, we continue the notation from section 3 that $G$ is the IPP solution operator for a given set of problem data.
\begin{alg}[AAIPP]
	\label{alg:aa}
	The AAIPP method with depth $m$ for solving the steady NSE is given by: 
	\begin{enumerate}
		\item[Step 0] Guess $(u_0,p_0)\in (X_h,Q_h)$.
		\item[Step 1] Compute $(\tilde u_1,\tilde p_1) = G(u_0,p_0)$\\
		Set residual $(w_1,z_1) = (\tilde u_1-u_0, \tilde p_1- p_0)$ and $(u_1,p_1) = (\tilde u_1,\tilde p_1) $.
		\item[Step $k$] For $k = 2,3,\dots$ set $m_k = \min\{k,m\},$
		\begin{enumerate}
			\item[a)] Find $(\tilde u_{k},\tilde p_k) = G(u_{k-1},p_{k-1})$ and set $(w_k,z_k) = (\tilde u_k-u_{k-1}, \tilde p_k- p_{k-1}).$
			\item[b)] Find $\{\alpha_j^k\}_{j=0}^{m_k}$ minimizing 
			\begin{align}
			\min_{\sum_{j=0}^{m_k}\alpha_{j}^{k}=1}\left\|  \sum\limits_{j=1}^{m_k}\alpha_j^k (w_{k-j}, z_{k-j})\right\|_X.
			\end{align}
			\item[c)] Update $(u_k,p_k) = (1-\beta_k)\left(  \sum\limits_{j=0}^{m_k} \alpha_j^k(u_j,p_j)  \right)+ \beta_k  \left( \sum\limits_{j=0}^{m_k} \alpha_j^k(\tilde u_j,\tilde p_j)  \right) $
			where $0<\beta_k\le 1$ is the damping factor.
		\end{enumerate}
	\end{enumerate}
\end{alg}
For any step $k$ with $\alpha_{m}^k =0,$ one should decrease $m$ and repeat Step k, to avoid potential cyclic behavior.  Under the assumption that $\alpha_m^k \neq 0$ and together with Lemmas \ref{LipschitzG}, \ref{lemma:wellG'} and \ref{lemma1}, we can invoke Theorem \ref{thm:genm} to establish the following convergence theory for AAIPP.
	\begin{thm}
		\label{thm:aa}
		For any step $k>m$ with $\alpha_{m}^k \neq 0$, the following bound holds for the AAIPP residual
		\begin{align*}
		\| (w_{k+1}, z_{k+1})\|_X \le& \theta_k(1-\beta_k + \beta_kC_L) \| (w_k,z_k)\|_X \\
		&+C\sqrt{1-\theta_{k}^2}\|(w_k,z_k)\|_{X} \sum\limits_{j=1}^{m} \|(w_{k-j+1},z_{k-j+1})\|_X,
		\end{align*}
		for the residual $(w_k,z_k)$ from Algorithm \ref{alg:aa}, where $\theta_k$ is the gain from the optimization problem,  $C_L$ is the Lipschitz constant of $G$ defined in Lemma \ref{LipschitzG}, and $C$ depending on $\theta_k, \beta_k, C_L$.
	\end{thm}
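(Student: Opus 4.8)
The plan is to recognize that Theorem~\ref{thm:aa} is a specialization of the abstract Anderson-acceleration estimate of Theorem~\ref{thm:genm} to the concrete IPP fixed point operator $G$, so the proof reduces to three steps: (i) verify that $G$ meets Assumptions~\ref{assume:g} and~\ref{assume:fg} with explicit constants, (ii) invoke Theorem~\ref{thm:genm}, and (iii) simplify the general bound \eqref{eqn:genm} into the displayed two-term form. The substantive analytic estimates are already available from Lemmas~\ref{LipschitzG}, \ref{lemma:wellG'}, and~\ref{lemma1}, so the remaining effort is the identification of constants and careful bookkeeping.

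First I would check Assumption~\ref{assume:g} on $Y = (X_h,Q_h)$ with the norm $\|\cdot\|_X$. The required fixed point $x^\ast$ is the discrete NSE solution, whose existence and uniqueness follow from Assumption~\ref{assume:kappa}. That $G$ is $C^1$ with Fr\'echet derivative given by $G'$ of Definition~\ref{defineG'} is exactly Lemma~\ref{lemma:wellG'}; the uniform operator bound \eqref{wellG'} then identifies $C_0 = C_L$, while the Lipschitz estimate \eqref{LipschitzG'} identifies $C_1 = \hat C_L = \sqrt{10}\,\nu^{-3/2} M C_L$. Next, Assumption~\ref{assume:fg} is precisely Lemma~\ref{lemma1}, yielding $\sigma = 1 - C_L$ (with the degenerate case $C_L = 1$ handled there). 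The linear-independence hypothesis on the columns of $F_j$ in \eqref{mtxF} required by Theorem~\ref{thm:genm} is supplied by the standing assumption $\alpha_m^k \neq 0$, which prevents degeneration of the depth-$m$ optimization.

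With the hypotheses verified I would substitute $C_0 = C_L$ into \eqref{eqn:genm}. The leading term $\theta_k((1-\beta_k) + C_0\beta_k)\|w_k\|_Y$ becomes $\theta_k(1 - \beta_k + \beta_k C_L)\|(w_k,z_k)\|_X$, which is the first term verbatim. For the remaining contribution I would factor out the common $\|(w_k,z_k)\|_X$ present in every higher-order summand and then bound the bracket: since $h(\theta_j) \le C\sqrt{1-\theta_j^2} + \beta_j\theta_j$ is uniformly bounded and, for $k > m$, one has $m_k = m$ so that the integer weights $(k-n)$ and the factor $m_k$ are all at most $m$, these quantities together with $C_1$ collapse into a single constant $C$. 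Keeping the factor $\sqrt{1-\theta_k^2}$ explicit and reindexing the residual sum $\sum_{n=k-m+1}^{k}$ as $\sum_{j=1}^{m}\|(w_{k-j+1},z_{k-j+1})\|_X$ then produces the second term $C\sqrt{1-\theta_k^2}\,\|(w_k,z_k)\|_X \sum_{j=1}^{m}\|(w_{k-j+1},z_{k-j+1})\|_X$.

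The main obstacle I expect is bookkeeping rather than analysis. One must first confirm that the constrained/affine formulation of Algorithm~\ref{alg:aa} (coefficients $\{\alpha_j^k\}$ with $\sum \alpha_j^k = 1$ and the update in step (c)) produces iterates, residuals, and gain factor $\theta_k$ identical to the increment formulation underlying Algorithm~\ref{alg:anderson} and Theorem~\ref{thm:genm}, so that \eqref{eqn:genm} applies verbatim. Then the three distinct higher-order groups in \eqref{eqn:genm}, which involve residuals $w_{k-m},\dots,w_k$ with differing integer weights and $h$-factors, must be merged into the single clean sum over $j = 1,\dots,m$; tracking these index shifts while verifying that the absorbed constant $C$ remains finite and independent of $k$ is the step requiring the most care.
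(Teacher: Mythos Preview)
Your proposal is correct and mirrors the paper's own treatment: the paper does not give a separate proof but simply notes that Lemmas~\ref{LipschitzG}, \ref{lemma:wellG'}, and~\ref{lemma1} verify Assumptions~\ref{assume:g} and~\ref{assume:fg} (with $C_0=C_L$, $C_1=\hat C_L$, $\sigma=1-C_L$), so that Theorem~\ref{thm:genm} applies and its bound \eqref{eqn:genm} collapses to the stated two-term form. Your additional bookkeeping in step~(iii) on the reindexing and absorption of constants is more explicit than what the paper provides, but the route is the same.
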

	This theorem tells us that Algorithm \ref{alg:aa} converges linearly with rate $\theta_k(1-\beta_k + \beta_kC_L) <1$, which improves on Algorithm \ref{alg:ipm} due to the scaling $\theta_k$ and the damping factor $\beta_k$.

\section{Numerical tests}
We now test AAIPP on the benchmark problems of 2D driven cavity, 3D driven cavity, and (time dependent) Kelvin-Helmholtz instability.
For all tests, the penalty parameter is chosen as $\epsilon=1$, and we use the velocity only formulation \eqref{ns7} for the IPP/AAIPP iteration. For the driven cavity problems, AA provides a clear positive impact, reducing total iterations and enabling convergence at much higher $Re$ than IPP without AA. For Kelvin-Helmholtz, AA significantly reduces the number of iterations needed at each time step.  Overall, our results show that AAIPP with $\epsilon=1$ is an effective solver.  In all of our tests, we use a direct linear solver (i.e. MATLAB's backslash) for the linear system solves of IPP/AAIPP as the problem sizes are such that direct solvers are more efficient.  Since the velocity-only formulation is used, convergence is measured in the $L^2$ norm instead of the $X$-norm, which requires the pressure.  For the problems we consider (up to 1.3 million degree of freedom in 3D) this remains a very robust and efficient linear solver.  In all of our tests, the cost of applying AA was negligible compared with the linear solve needed at each iteration, generally at least two orders of magnitude less.

\subsection{2D driven cavity}
We first test AAIPP for the steady NSE on a lid-driven cavity problem. The domain of the problem is the unit square $\Omega=(0,1)^2$ and we impose Dirichlet boundary conditions by $ u |_{y=1}=(1,0)^T $ and $u=0$ on the other three sides. The discretization uses $P_2$ elements on barycenter-refinement of uniform triangular mesh. We perform AAIPP with $m=0\ \text{(no acceleration)}, 1, 2, 5$ and $10$ with varying $Re,$ all with no relaxation ($\beta=1$).   In the following tests, convergence was declared if the velocity residual fell below $10^{-8}$.

Convergence results for AAIPP using $Re=1000, 5000$ and $10000$ with varying $m$ when $h=1/128$ are shown in Figure \ref{IP_conv}.  It is observed that as $m$ increases, AA improves convergence.  In particular, as $Re$ increases, AA is observed to provide a very significant improvement; for $Re=10000$, the iteration with $m=0$ (i.e. IPP iteration) fails but with AA convergence is achieved quickly.

In Figure \ref{Picard_conv}, convergence results by Anderson accelerated Picard (AAPicard) iteration are shown (i.e. no iterated penalty, and solve the nonsymmetric saddle point linear system at each iteration).  The same mesh is used, and here with $(P_2,P_1^{disc})$ Scott-Vogelius elements.  Convergence behavior for AAPicard is observed to be overall similar to that of AAIPP with $\epsilon=1$, with the exception that for lower $m=1,2$ AAPicard performs slightly better than AAIPP in terms of total number of iterations.  Since each iteration of AAIPP is significantly cheaper than each iteration of AAPicard (generally speaking, since AAPicard must solve a more difficult linear system), these results show AAIPP with $\epsilon=1$ performs very well.

%
%

\begin{figure}[H]
	\centering
	\includegraphics[width = .32\textwidth, height=.25\textwidth]{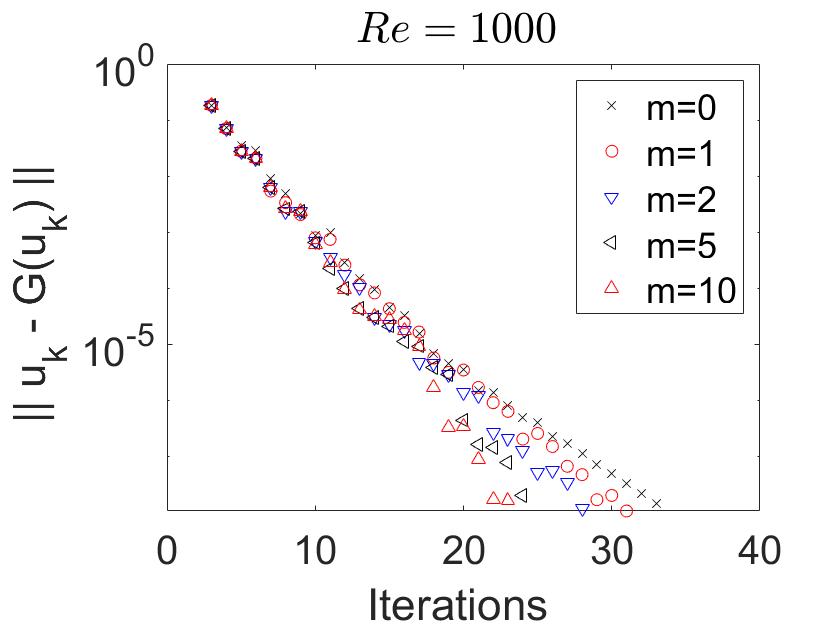}
	\includegraphics[width = .32\textwidth, height=.25\textwidth]{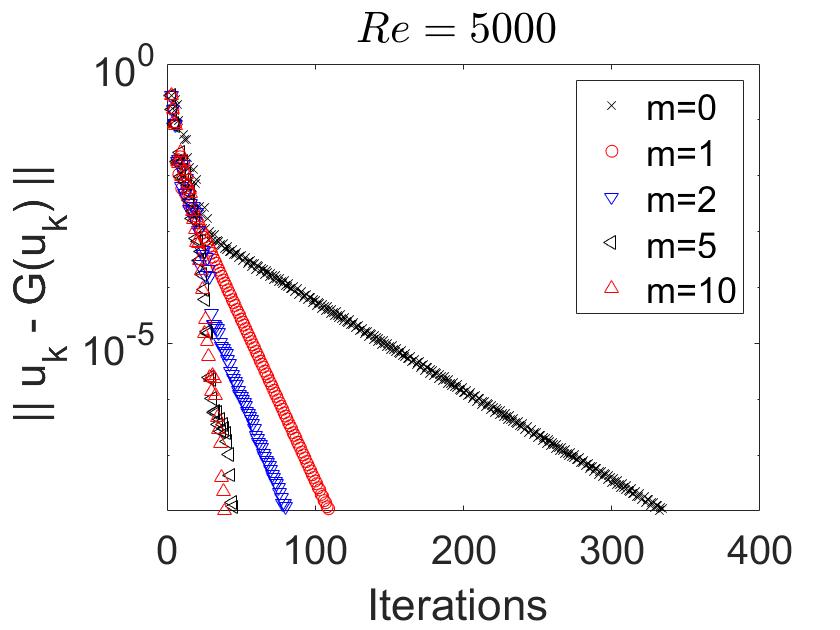}
	\includegraphics[width = .32\textwidth, height=.25\textwidth]{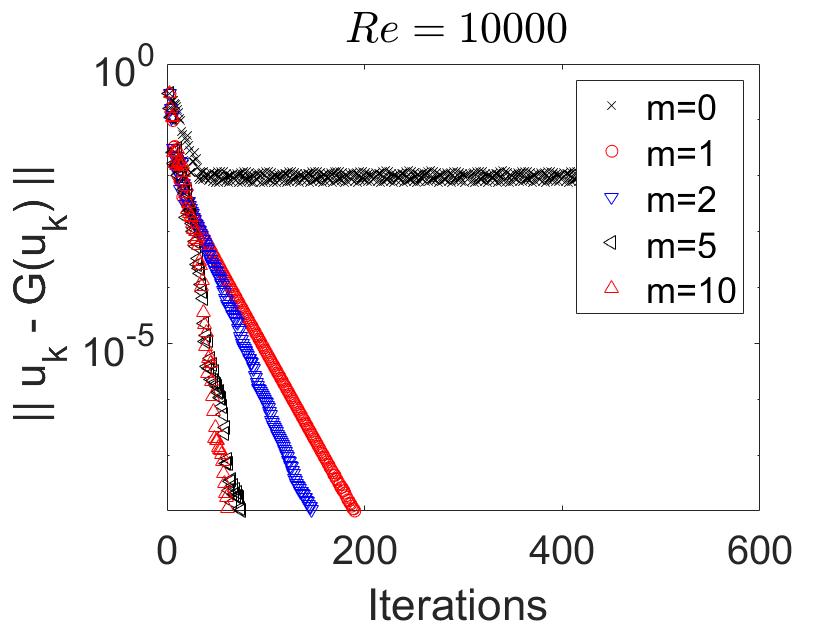}
	\caption{Shown above is convergence of AAIPP with varying $Re$ and $m$, using mesh width $h=1/128$.}\label{IP_conv}
\end{figure}

\begin{figure}[H]
	\centering
	\includegraphics[width = .32\textwidth, height=.25\textwidth]{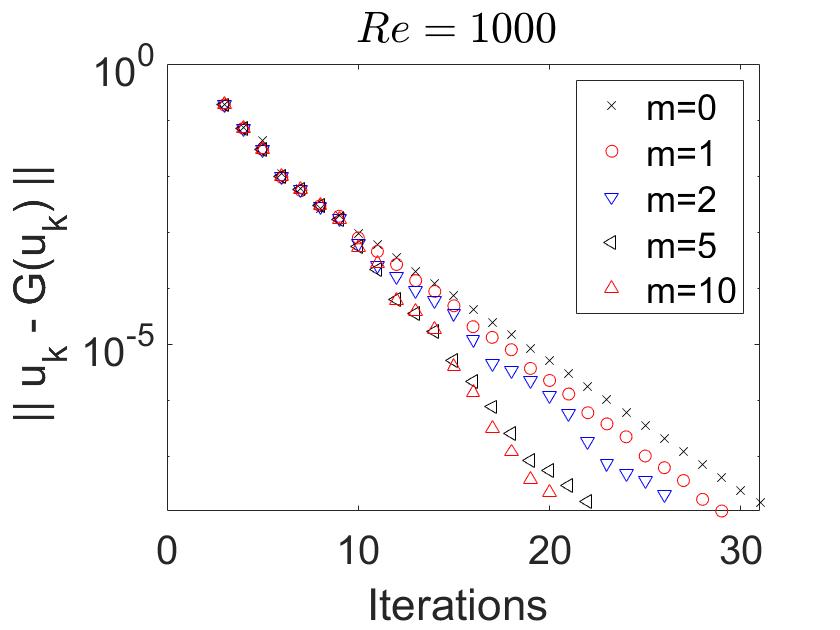}
	\includegraphics[width = .32\textwidth, height=.25\textwidth]{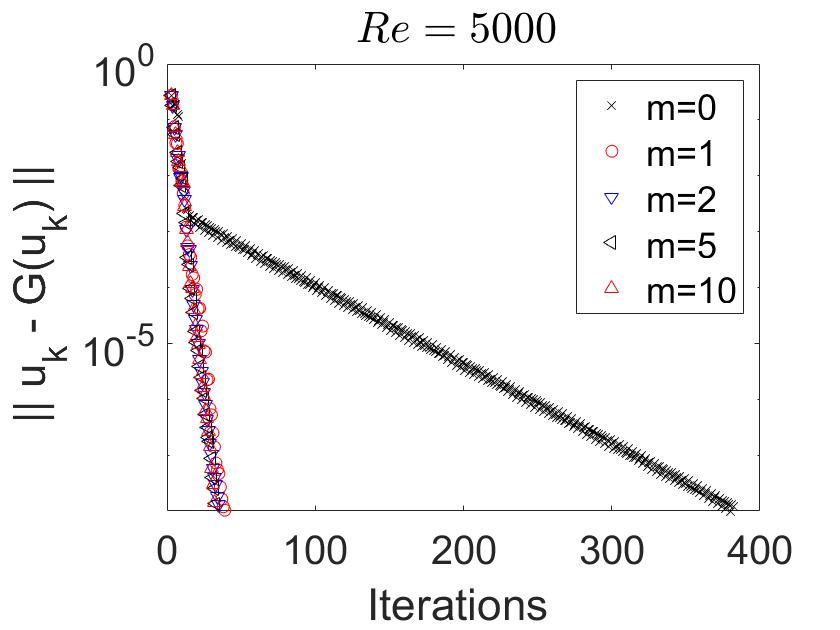}
	\includegraphics[width = .32\textwidth, height=.25\textwidth]{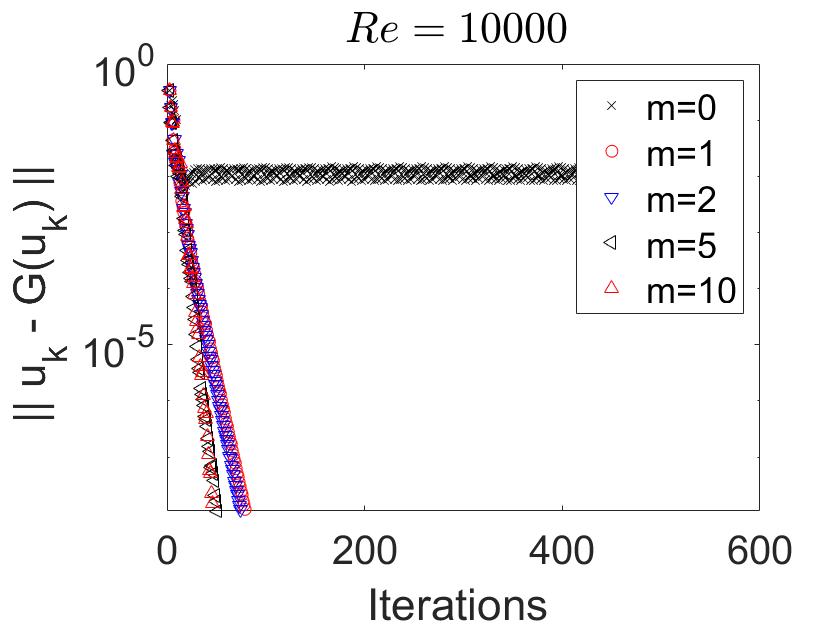}
	\caption{Shown above is convergence of AAPicard with varying $Re$ and $m$, using mesh width $h=1/128$.}\label{Picard_conv}
\end{figure}

\subsection{3D driven cavity}
We next test AAIPP on the 3D lid driven cavity. In this problem, the domain is the unit cube, there is no forcing $ (f = 0) $, and homogeneous Dirichlet boundary conditions are enforced on all walls and $u=\langle 1,0,0\rangle$ on the moving lid. We compute with $P_3$ elements on Alfeld split tetrahedral meshes with 796,722 and 1,312,470 total degrees of freedom (dof) that are weighted towards the boundary by using a Chebychev grid before tetrahedralizing. We test AAIPP with varying $Re$, $m$, and relaxation parameter $\beta$.

Figure \ref{midslideplanes} shows a visualization of the computed solutions by AAIPP with $m=10$ for $Re=100, 400$ and $1000$ on the 796,722 total dof, which are in good qualitative agreement with reference results of Wong and Baker \cite{WongBaker2002}. In Figure \ref{centerlines}, we compare centerline x-velocities for varying $Re=100, 400$ and $1000$ on the same mesh with reference data of Wong and Baker \cite{WongBaker2002} and obtain excellent agreement.

In table \ref{lowerRe}, the number of AAIPP iterations required for reducing the velocity residual to fall below $10^{-8}$, for varying dof, $Re$, relaxation parameter $\beta$ and depth, within 300 iterations. In each cases, we observe that as $m$ increases, the number of iterations decreases, and the maximum $m=k-1$ is observed to be the best choice in all cases. Also, the relaxation parameter $\beta$ is observed to give improved results for larger $Re$.  Thus, AAIPP with properly chosen depth and relaxation can significantly improve the ability of the iteration to to converge.

We also tested AAIPP with $Re=1500,2000,2500$ which would not converge within 1000 iterations without AA. Table  \ref{higherRe} shows that with sufficiently large $m$ and properly chosen relaxation parameter, AAIPP converges for even higher $Re$.  This is especially interesting since the bifurcation point where this problem becomes time dependent is around $Re\approx 2000$ \cite{CPHGG16}, and so here AAIPP is finding steady solutions in the time dependent regime.


\begin{table}[H]
	\centering
	\begin{tabular}{|l|l|l|cccccc|}
		\hline
		\multicolumn{3}{|c|}{} & \multicolumn{6}{|c|}{Iterations for convergence}\\
		\hline
		DoF       & $Re$& $\beta$ &   m=0  &    m=1 &    m=2 &    m=5 & m=10& m=k-1\\
		\hline
		796,722   &100  &  1      & $>300$ & $>300$ &  138   &  98    & 92  &75\\
		\hline  
		796,722   &400  &  1      & $>300$ & $>300$ &  260   &  120   & 91  &62\\
		\hline  
		796,722   &1000 &  1      & $>300$ & $>300$ & $>300$ & $>300$ & 126 &77\\
		\hline
		796,722   &1000 &  0.5    & $>300$ & $>300$ & $>300$ &  198   & 114 &68\\
		\hline
		796,722   &1000 & 0.2     & $>300$ & $>300$ & $>300$ &$>300$ & 177 & 87\\
		\hline
		1,312,470 &1000 &  1      & $>300$ & $>300$ & $>300$ & $>300$ & 140 &83\\
		\hline
		1,312,470 &1000 &  0.5    & $>300$ & $>300$ & $>300$ & 203    & 109 &71\\
		\hline
		1,312,470 &1000 &  0.2    & $>300$ & $>300$ & $>300$ & $>300$  & 174 &88\\
		\hline      
	\end{tabular}\caption{Shown above is the number of AAIPP iterations required for convergence in the 3D driven-cavity tests, with varying dof, $Re$, damping factor $\beta$ and depth. }\label{lowerRe}
\end{table}

\begin{figure}[H]
	\centering
	\includegraphics[width = .32\textwidth, height=.25\textwidth]{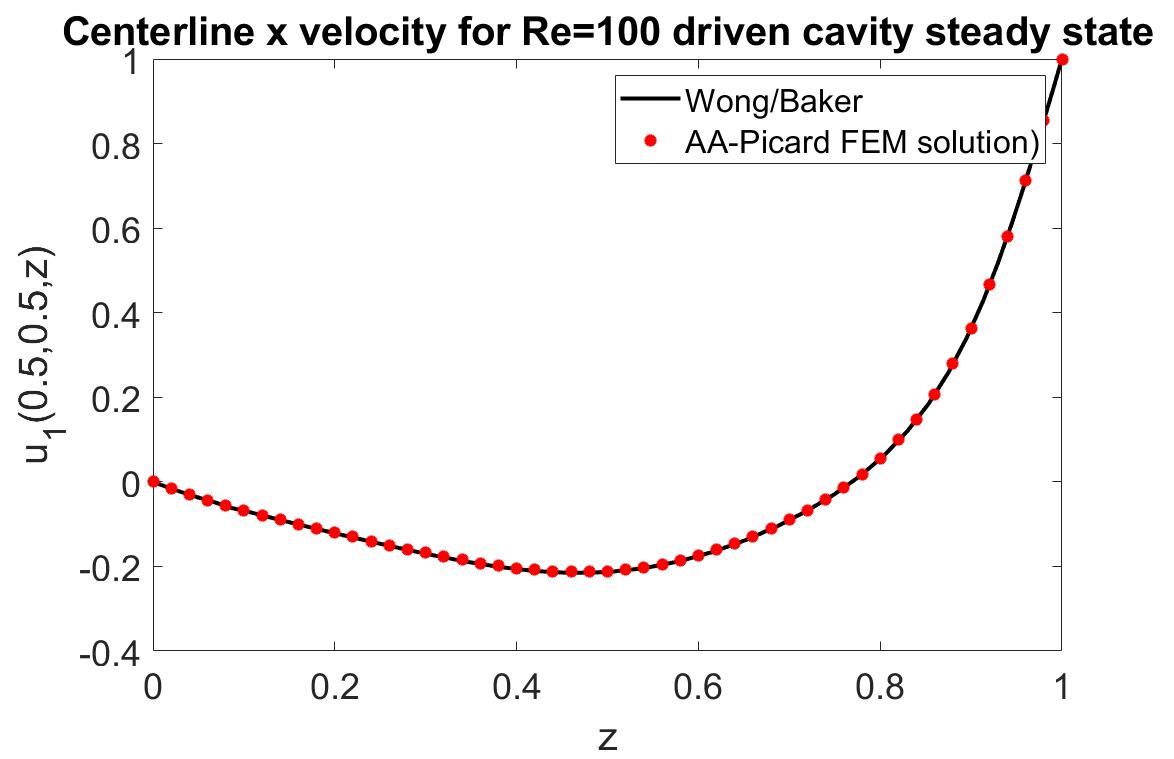}
	\includegraphics[width = .32\textwidth, height=.25\textwidth]{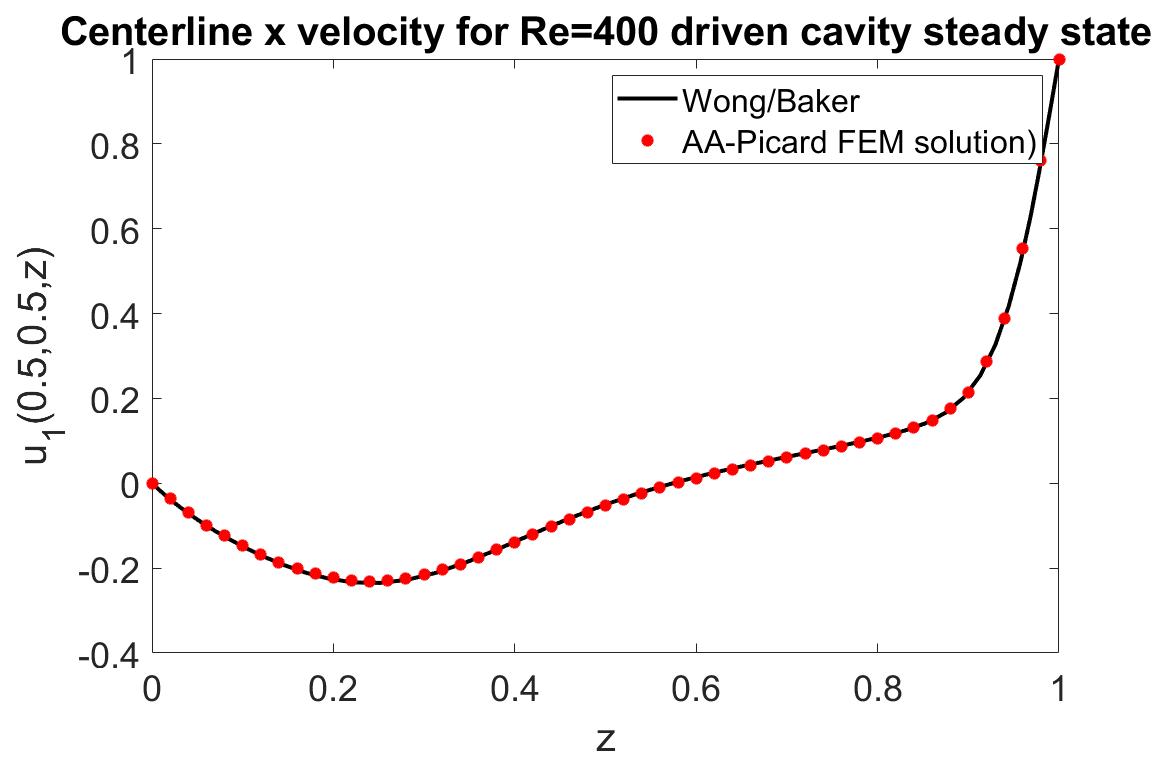}
	\includegraphics[width = .32\textwidth, height=.25\textwidth]{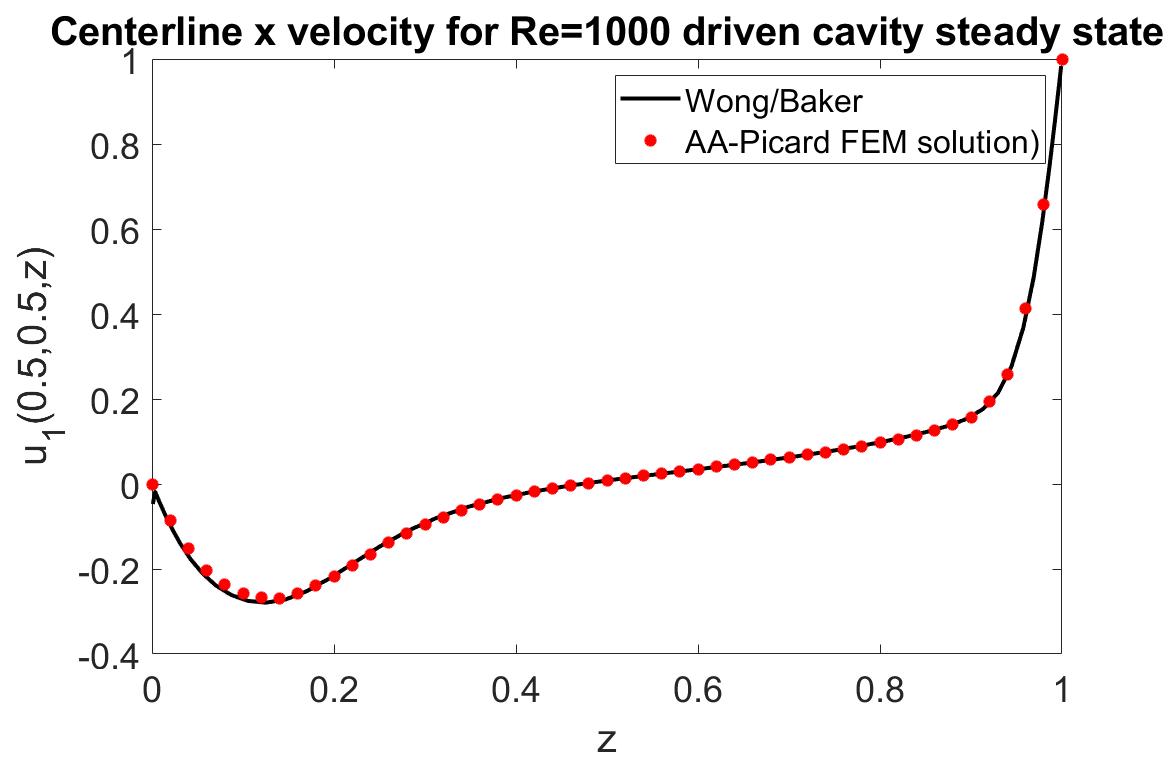}
	\caption{Shown above is the centerline x-velocity plots for the 3D driven cavity simulations at $Re=100,400,1000$, using AAIPP with $m=10$, $\beta=1$ and 796,722 dof.}\label{centerlines}
\end{figure}

\begin{table}[H]
	\centering
	\begin{tabular}{ccccc}
		\hline
		\hline
		$Re$ & dof &m & $\beta$ & k \\
		\hline
		\hline
		1500 &1,312,470 &k-1 & 0.5 & 102 \\
		1500 &1,312,470 &k-1 & 0.3 &111  \\
		\hline   
		2000 &1,312,470 &k-1 & 0.5 & 154 \\
		2000 &1,312,470 &k-1 & 0.3 & 162  \\
		\hline
		2500 &1,312,470& 100 & 0.5 & $ 381 $\\
		2500 &1,312,470& 100 & 0.3 & $ >1000 $\\
		2500 &1,312,470& k-1 & 0.5 &$ 371 $ \\
		\hline	
		\hline		
	\end{tabular}\caption{Shown above is the number of AAIPP iterations required for convergence in the 3D driven-cavity tests, with varying dof, $Re$, damping factor $\beta$ and depth of AA iterations. }\label{higherRe}
\end{table}

%
%

\begin{figure}[H]
	\centering
	\includegraphics[width = 0.9\textwidth, height=.23\textwidth]{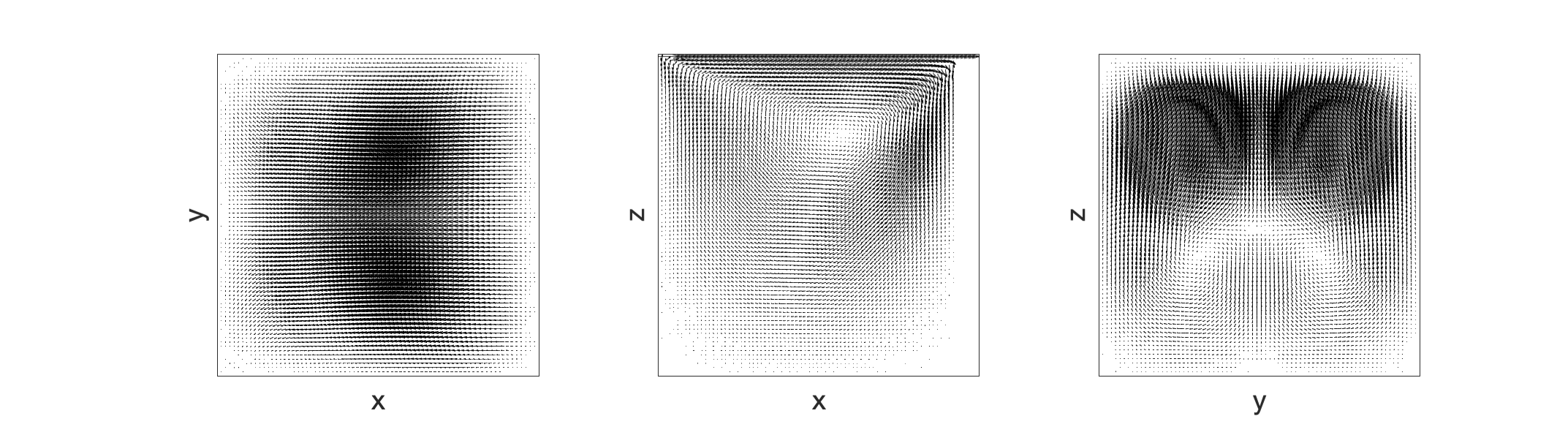}\\
	\includegraphics[width = 0.9\textwidth, height=.23\textwidth]{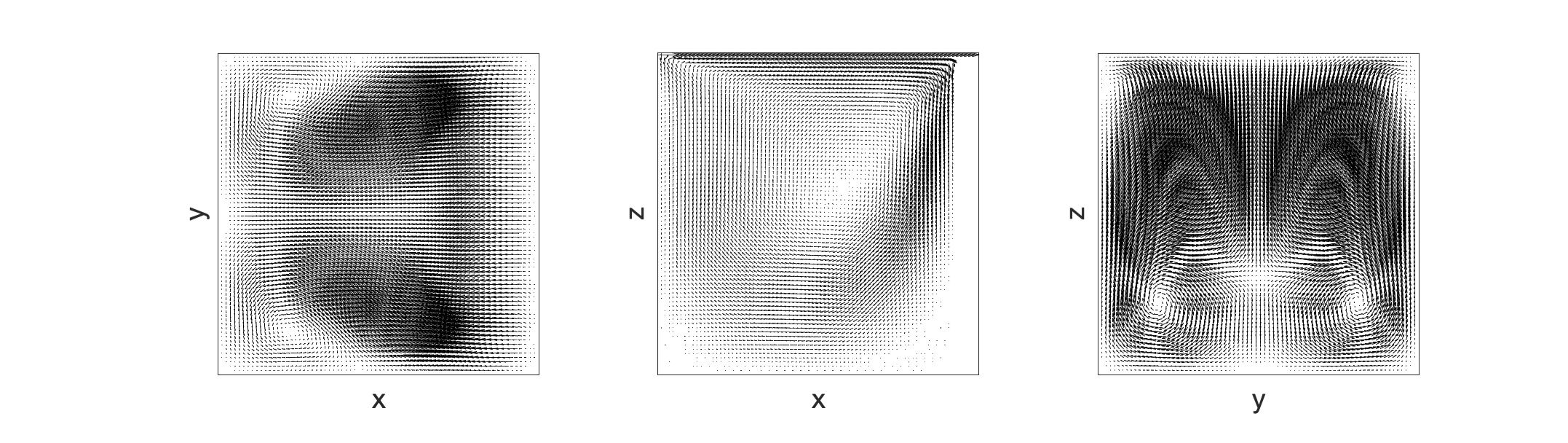}\\
	\includegraphics[width = 0.9\textwidth, height=.23\textwidth]{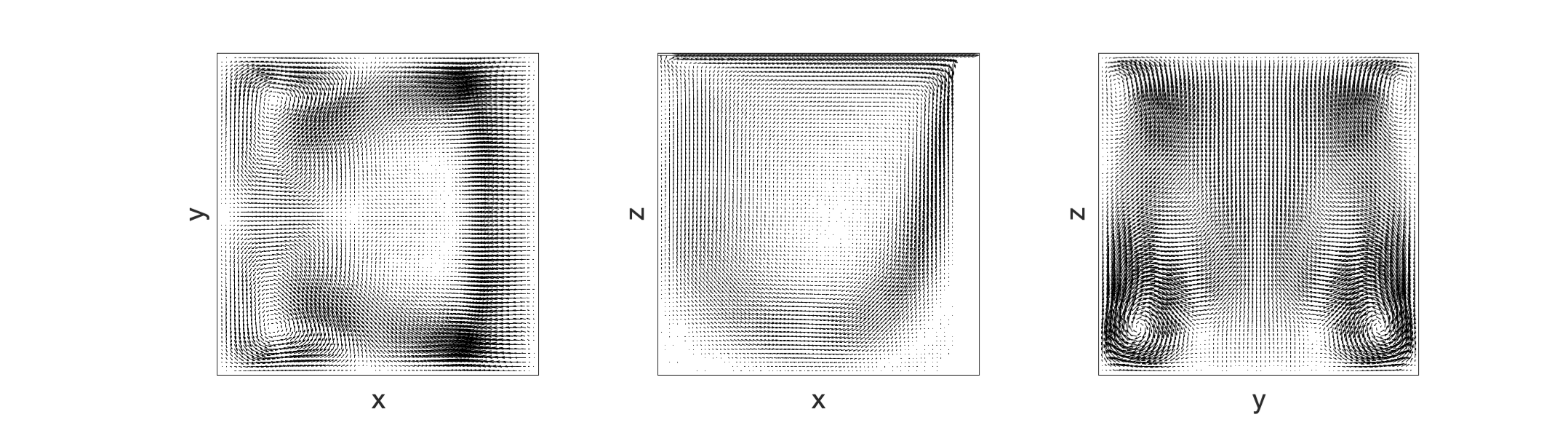}
	\caption{Shown above are the midsliceplane plots for the 3D driven cavity simulations at $ Re = 100, 400 $ and $1000$ by AAIPP with $m=10$, $\beta=1$ and 796,722 dof.}\label{midslideplanes}
\end{figure}

\subsection{Kelvin-Helmholtz instability}

\begin{figure}[!h]
	\begin{center}
		\includegraphics[width=.3\textwidth, height=.23\textwidth,viewport=65 40 520 390, clip]{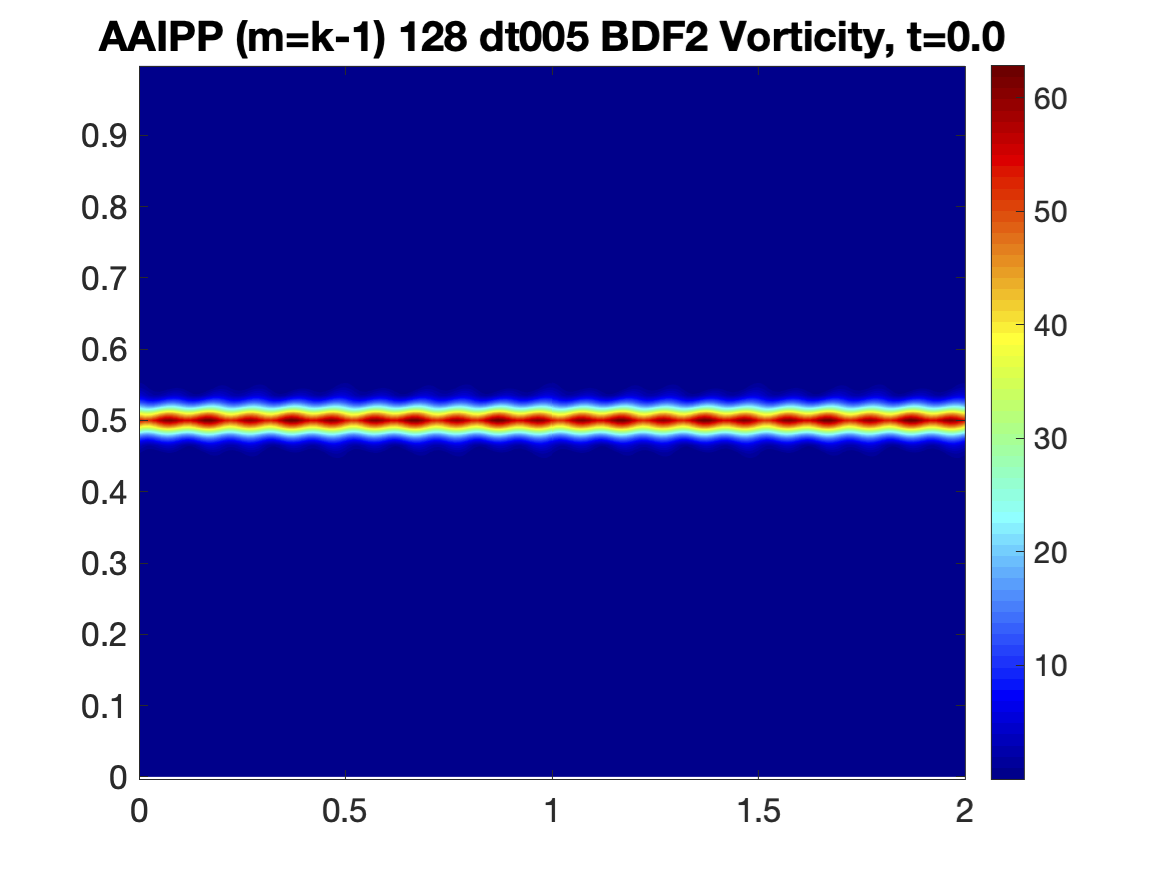}
		\includegraphics[width=.3\textwidth, height=.23\textwidth,viewport=65 40 520 390, clip]{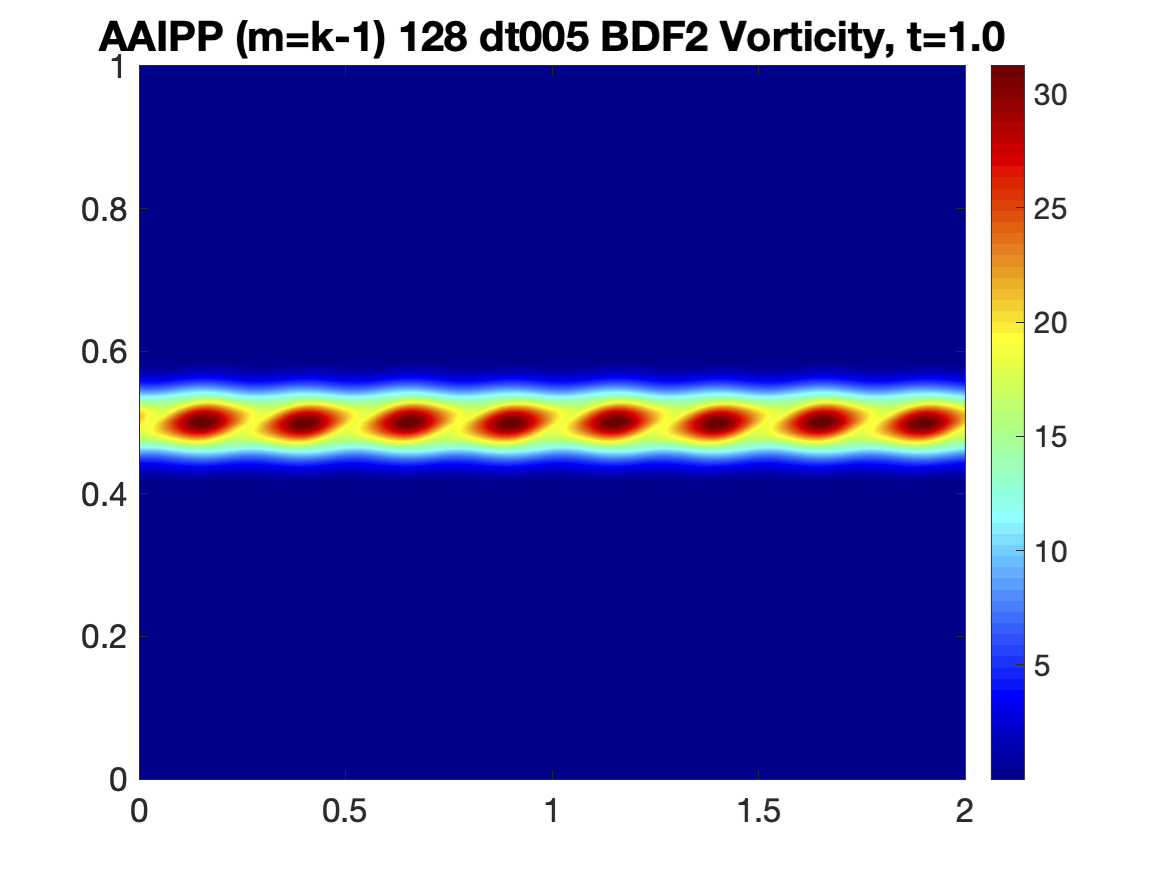} 
		\includegraphics[width=.3\textwidth, height=.23\textwidth,viewport=65 40 520 390, clip]{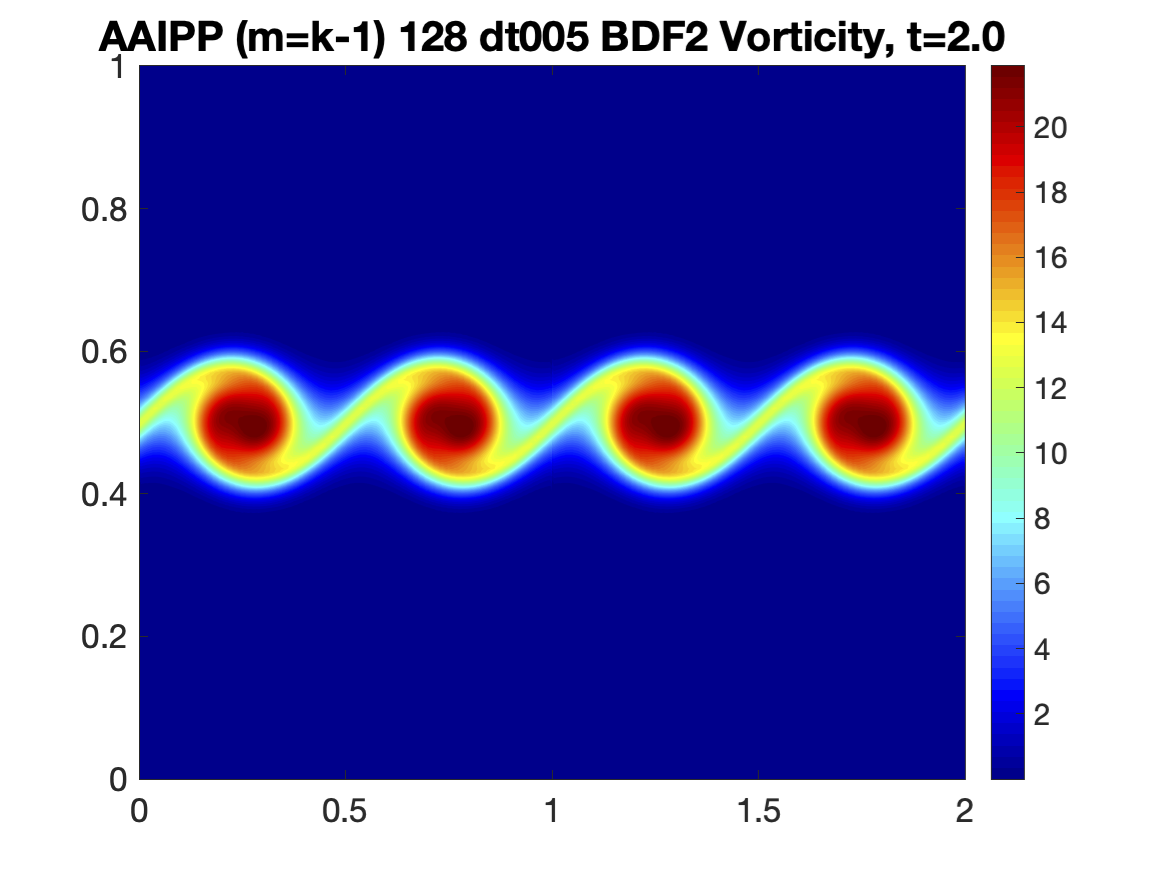}\\
		\includegraphics[width=.3\textwidth, height=.23\textwidth,viewport=65 40 520 390, clip]{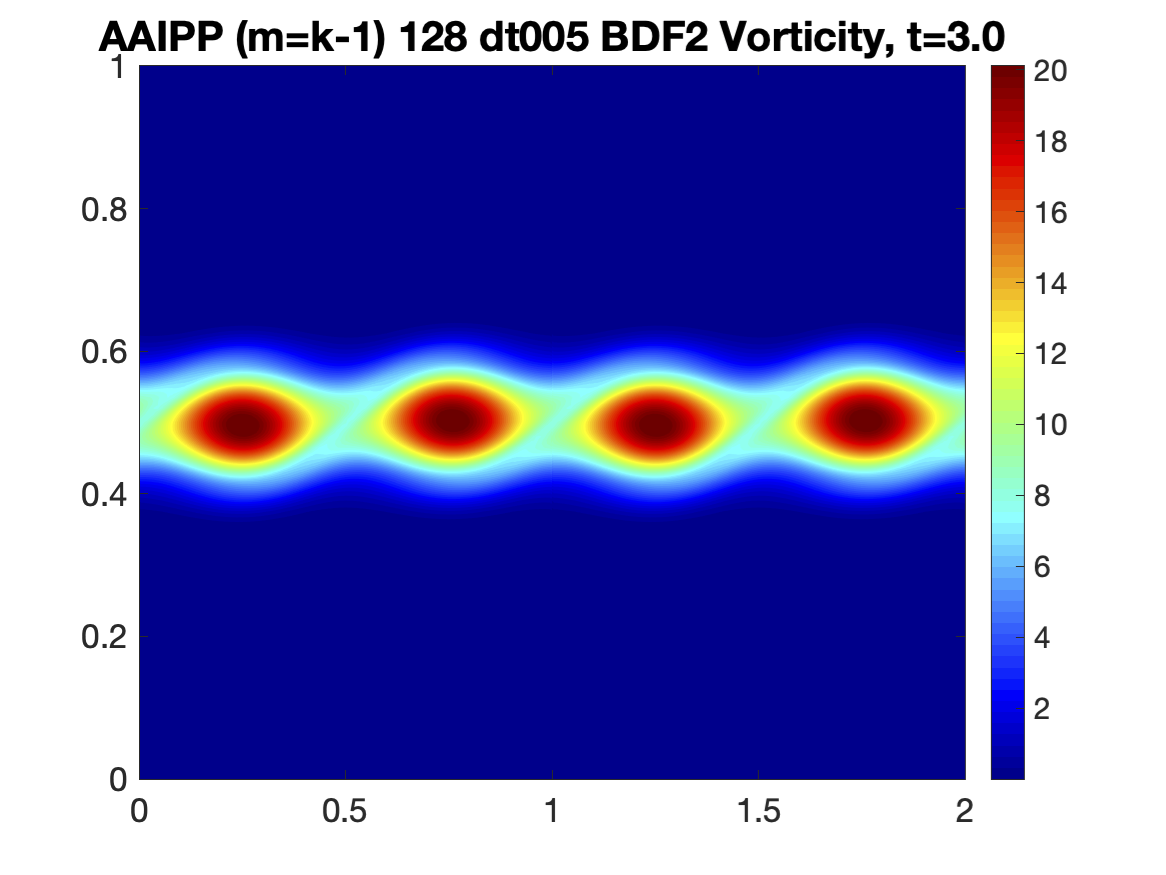}
		\includegraphics[width=.3\textwidth, height=.23\textwidth,viewport=65 40 520 390, clip]{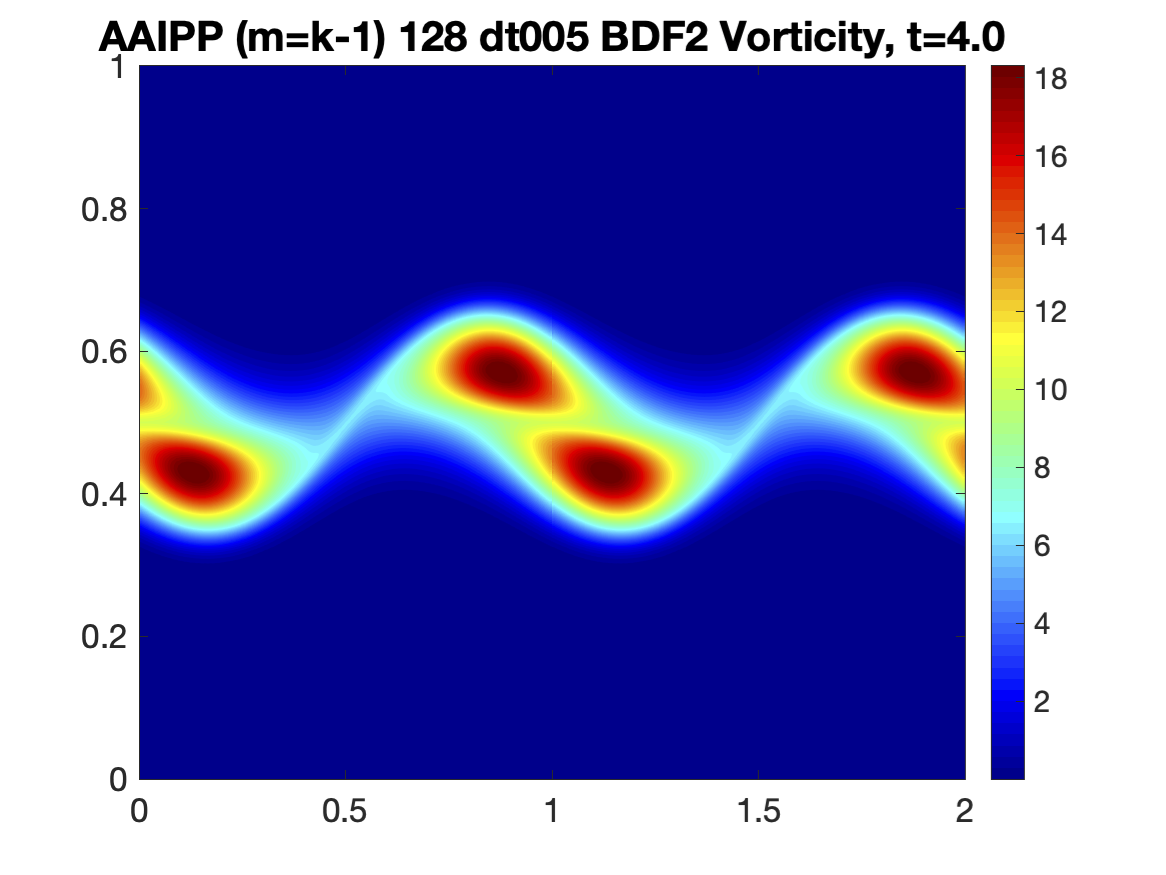} 
		\includegraphics[width=.3\textwidth, height=.23\textwidth,viewport=65 40 520 390, clip]{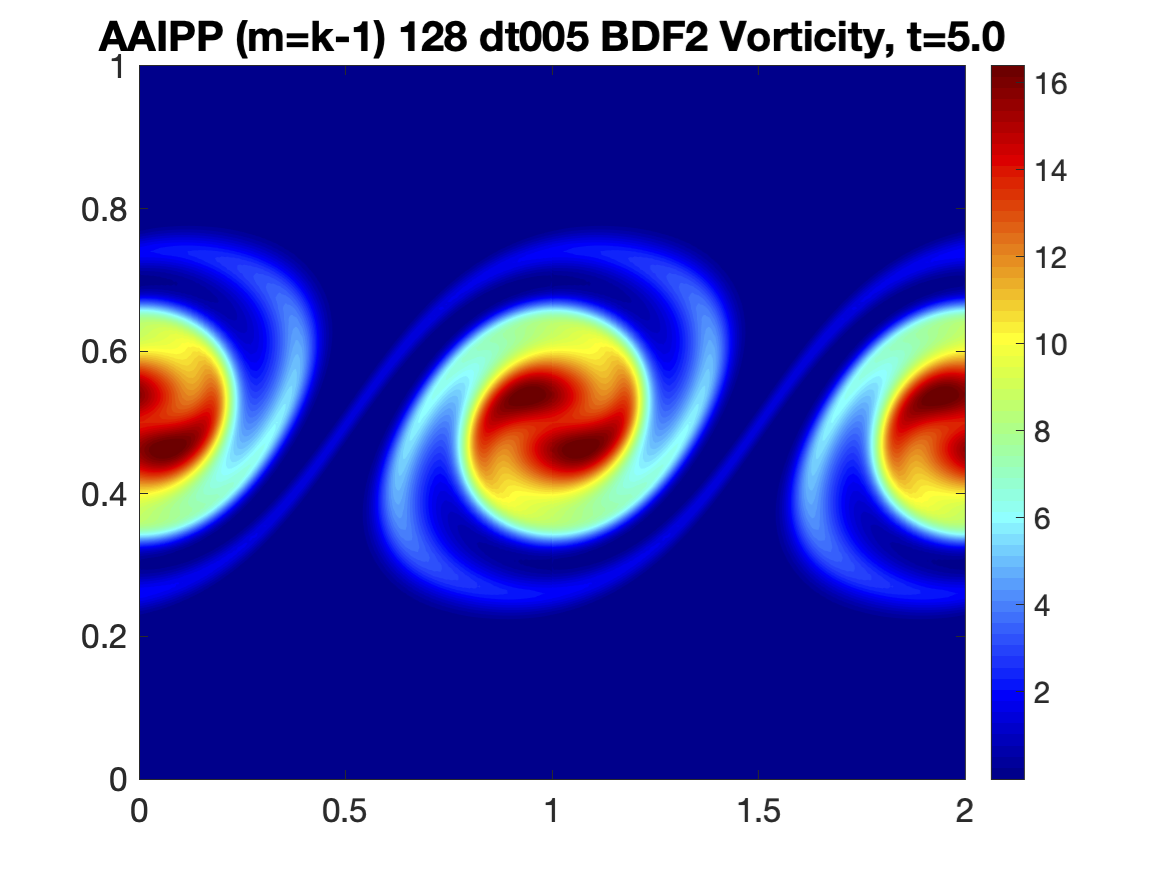}
	\end{center}
	\caption{\label{kh100}
		Shown above are $Re=100$ absolute vorticity contours for IPP solution, at times t=0, 1, 2, 3, 4, 5 and 6 (left to right, top to bottom).}
\end{figure}

\begin{figure}[!h]
	\begin{center}
		\includegraphics[width=.3\textwidth, height=.23\textwidth,viewport=0 0 520 400, clip]{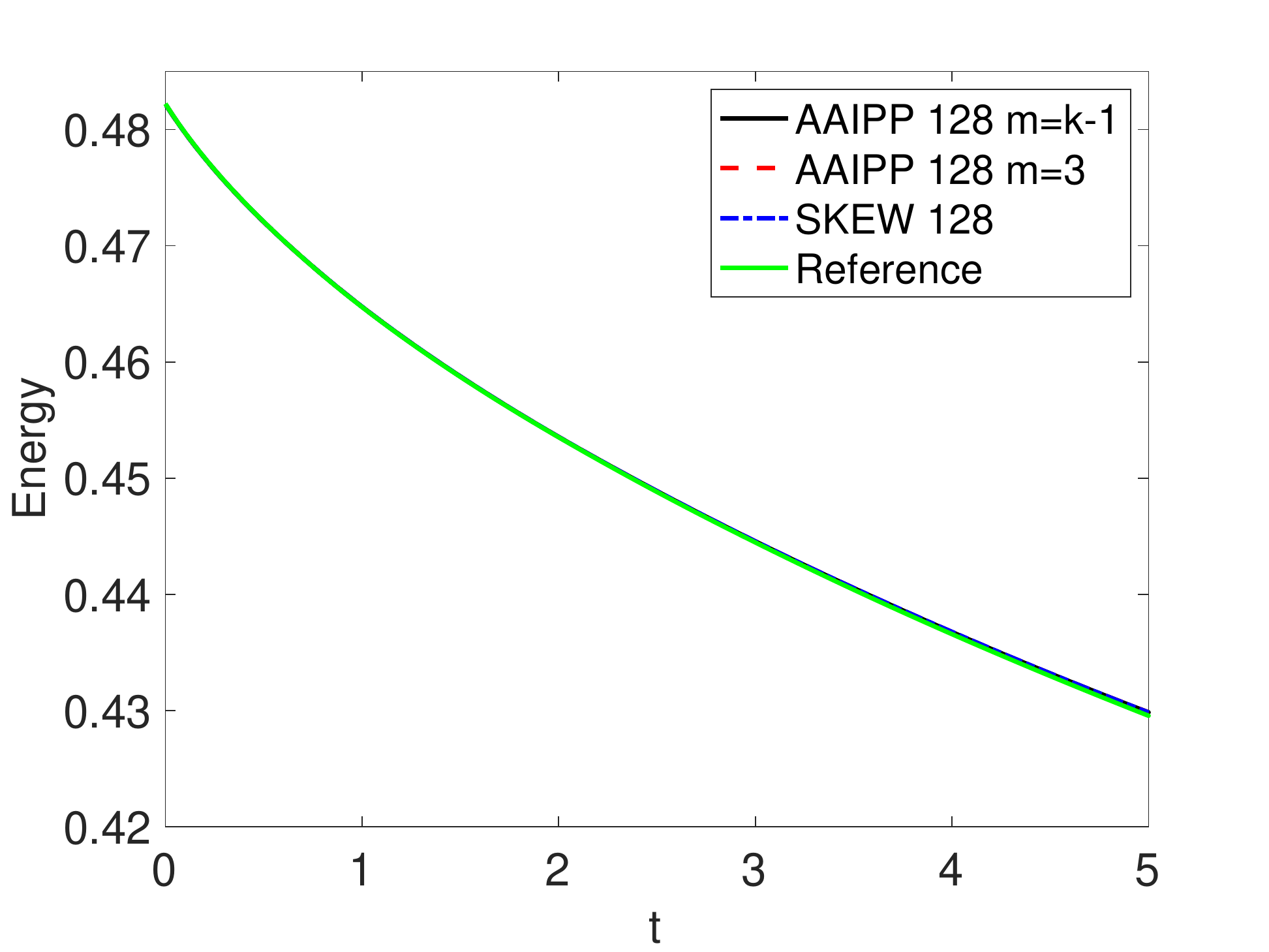}
		\includegraphics[width=.3\textwidth, height=.23\textwidth,viewport=0 0 520 400, clip]{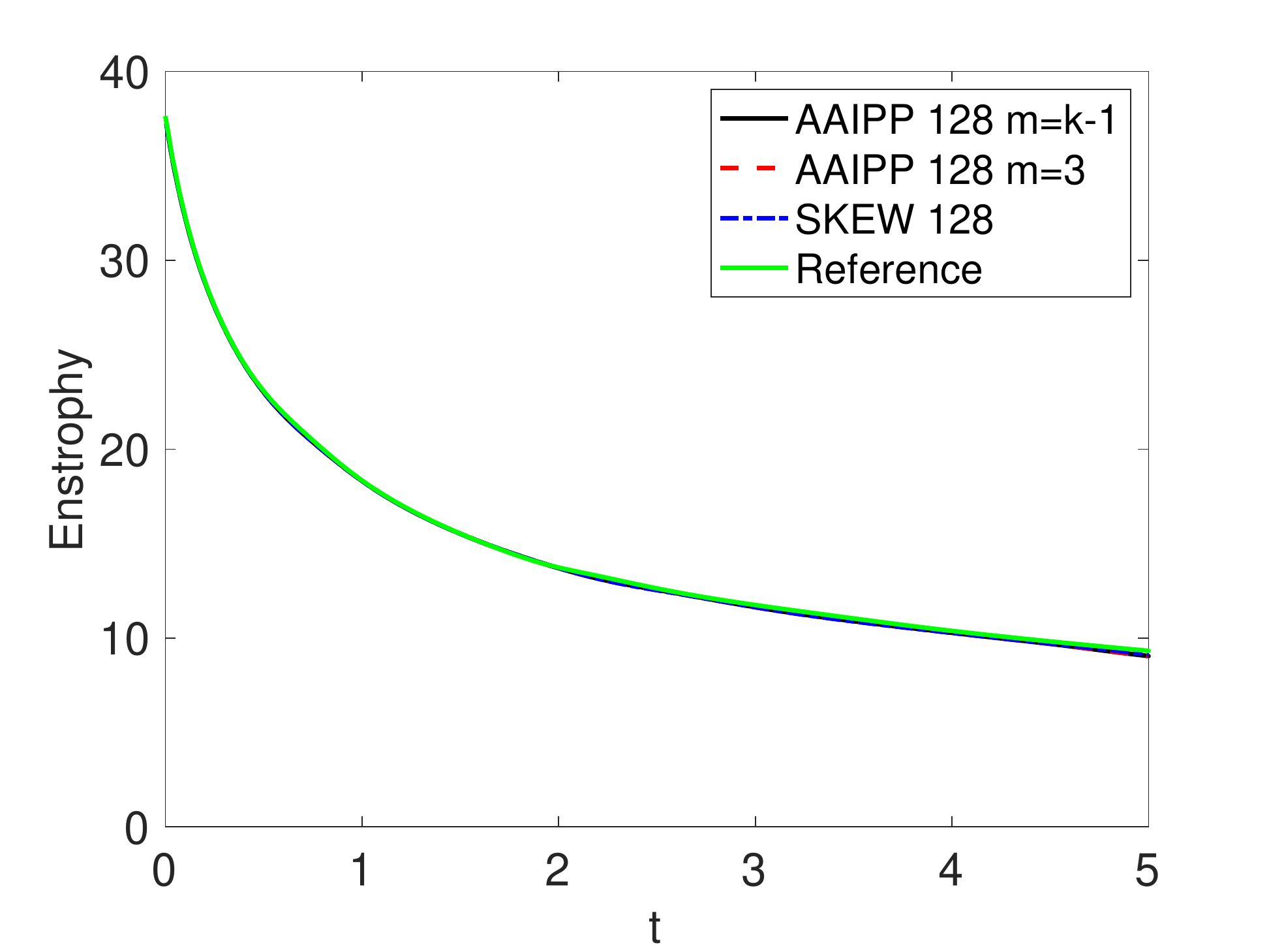}
		\includegraphics[width=.3\textwidth, height=.23\textwidth,viewport=0 0 520 400, clip]{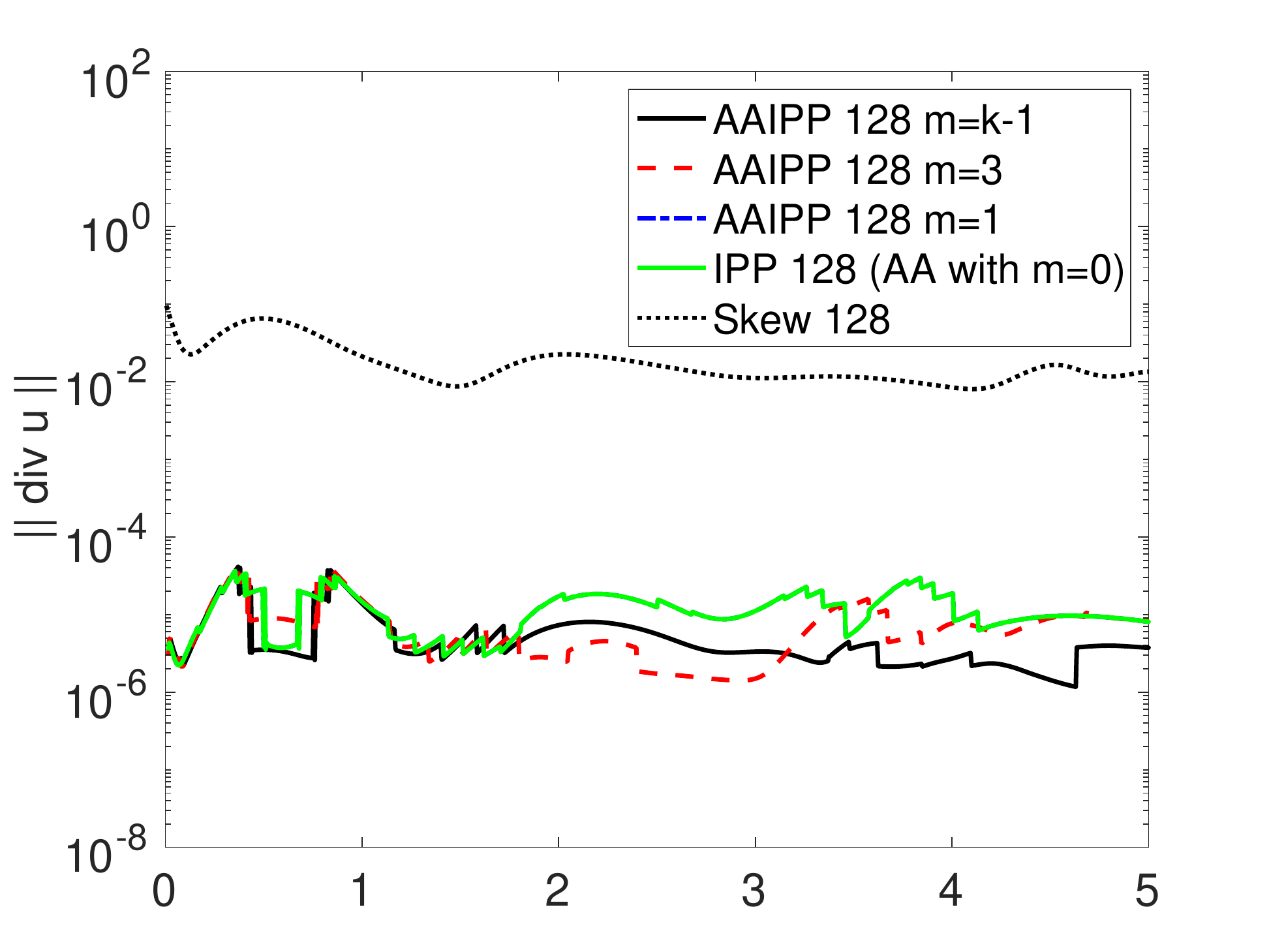}
	\end{center}
	\caption{\label{kh1002}
		Shown above are $Re=100$ energy, enstrophy and divergence error versus time for the IPP/AAIPP, SKEW and reference solutions from \cite{SJLLLS18}.}
\end{figure}

\begin{figure}[!h]
	\begin{center}
		\includegraphics[width=.5\textwidth, height=.3\textwidth,viewport=0 0 520 400, clip]{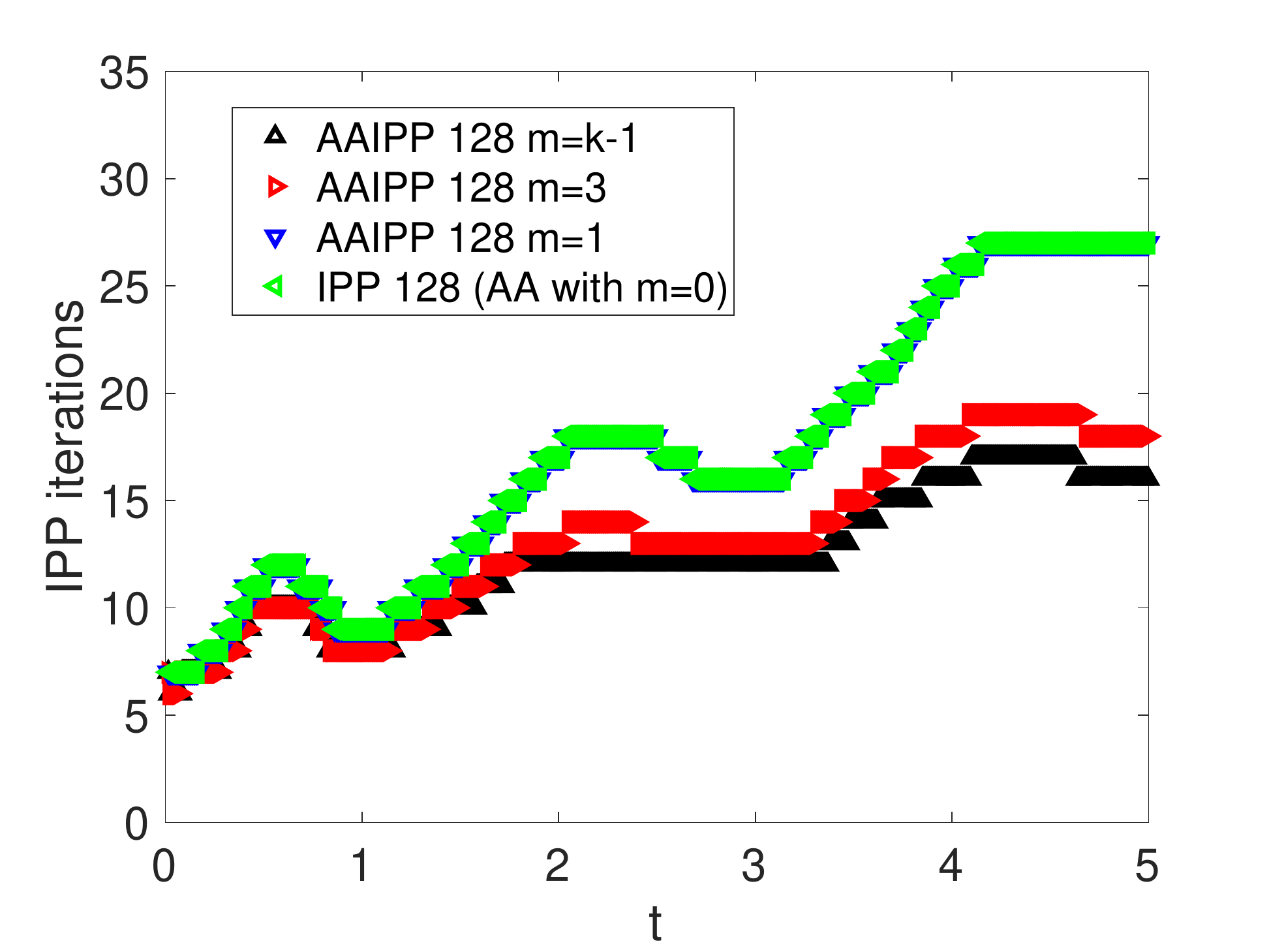}
	\end{center}
	\caption{\label{kh1003}
		Shown above are IPP/AAIPP total iteration counts at each time step, for $Re=100$ and varying $m$.}
\end{figure}

For our last test we consider a benchmark problem from \cite{SJLLLS18} for 2D Kelvin-Helmholtz instability.  This test is time dependent, and we apply the IPP/AAIPP method at each time step to solve the nonlinear problem.  The domain is the unit square, with periodic
boundary conditions at $x=0,1$.  At $y=0,1$, the no penetration boundary condition $u \cdot n=0$ is strongly enforced, along with a natural weak enforcement of the free-slip condition $(-\nu\nabla u \cdot n)\times n=0$.  The initial condition is given by
\[
u_0(x,y) = \left( \begin{array}{c} \tanh\left( 28 (2y-1) \right) \\ 0 \end{array} \right) + 10^{-3} \left( \begin{array}{c} \partial_y \psi(x,y) \\ -\partial_x \psi(x,y) \end{array} \right),
\]
where $\frac{1}{28}$ is the initial vorticity thickness, $10^{-3}$ is a noise/scaling factor, and 
\[
\psi(x,y) =  \exp \left( -28^2 (y-0.5)^2 \right) \left( \cos(8\pi x) + \cos(20\pi x) \right).
\]
The Reynolds number is defined by $Re=\frac{1}{28 \nu}$, and $\nu$ is defined by selecting $Re$.  Solutions are computed for both $Re=100$ and $Re=1000$, up to end time $T=5$.

Define $X=\{ v\in H^1(\Omega),\ v(0,y)=v(1,y),\ v\cdot n=0 \mbox{ at } y=0,1 \}$, and take $V = \{ v \in X,\ \| \nabla \cdot v \|=0\}$ and $X_h=P_2(\tau_h)\cap X$.  The problem now becomes at each time step: Find $u^{n+1}\in X_h \cap V$ satisfying
\begin{multline*}
\frac{1}{2\Delta t}(3u^{n+1},v) + (u^{n+1} \cdot\nabla u^{n+1},v) + \nu(\nabla u^{n+1},\nabla v) \\= (f,v) + \frac{1}{2\Delta t}(4u^n - u^{n-1},v)\  \forall v\in X_h \cap V.
\end{multline*}
The IPP iteration to find each $u^{n+1}$ is thus analogous to what is used for solving the steady NSE above including pressure recovery, but now with the time derivative terms and using the previous time step solution as the initial guess. 

For accuracy comparison, we also give results using the standard (nonlinear) BDF2 mixed formulation using skew-symmetry, which we will refer to as the SKEW formulation:
Find $u^{n+1}\in X_h$ and $p_h^{n+1} \in Q_h = P_1(\tau_h)\cap L^2_0(\Omega)$ satisfying
\begin{align*}
\frac{1}{2\Delta t}(u^{n+1},v) + b^*(u^{n+1},u^{n+1},v)+(&p^{n+1},\nabla \cdot v) + \nu(\nabla u^{n+1},\nabla v)  \\&= (f,v) + \frac{1}{2\Delta t}(u^n - 2u^{n-1},v), \\
(\nabla \cdot u^{n+1},q) &= 0,
\end{align*}
for all $(v,q)\in (X_h,Q_h)$. The nonlinear problem for SKEW is resolved using Newton's method, and since Taylor-Hood elements are being used, a large divergence error is expected.

For $Re=100$, a $h=\frac{1}{128}$ uniform triangular mesh was used, together with a time step of $\Delta t=0.005$.  The tolerance for the nonlinear solver was to reduce the $H^1$ relative residual to $10^{-6}$.   Simulations were performed with IPP, AAIPP with $m=1,3,k-1$, and SKEW.  The evolution of the flow can be seen in figure \ref{kh100} as absolute vorticity contours from the AAIPP solutions (all IPP and AAIPP solutions were visually indistinguishable), and these match those of the high resolution solution from \cite{SJLLLS18} and solutions from \cite{OR20}.   In addition to their plots of vorticity contours being the same, the IPP and AAIPP solutions yielded the same energy and enstrophy to five significant digits (i.e. they all give the same solution, as expected).  Figure \ref{kh1002} shows the energy, enstrophy and divergence of the IPP/AAIPP/SKEW solutions versus time, along with energy and enstrophy of the high resolution solutions from \cite{SJLLLS18}.  We observe that the energy and enstrophy solutions of IPP, AAIPP, and SKEW all match the high resolution reference solutions very well.  As expected, the IPP/AAIPP solutions have divergence error around $10^{-5}$, which is consistent with a relative residual $L^2$ stopping criteria of $10^{-8}$.  SKEW, however, has a large divergence error that is $O(10^{-2})$ despite a rather fine mesh and essentially resolving the flow; since Taylor-Hood elements are used, this large divergence error is not surprising \cite{JLMNR17}.

Figure \ref{kh1003} shows the number of iterations needed to converge IPP/AAIPP at each time step.  We observe that using AAIPP with $m=1$ offers no real improvement over IPP ($m=0$) in converging the iteration, however both $m=3$ and $m=k-1$ both offer significant improvement.  While at early iterations the larger $m$ choices give modest improvement, by $t=4$ the larger $m$ choices cut the iteration count from 27 to 16 at each time step.

\begin{figure}[!h]
	\begin{center}
		\includegraphics[width=.3\textwidth, height=.23\textwidth,viewport=65 40 520 390, clip]{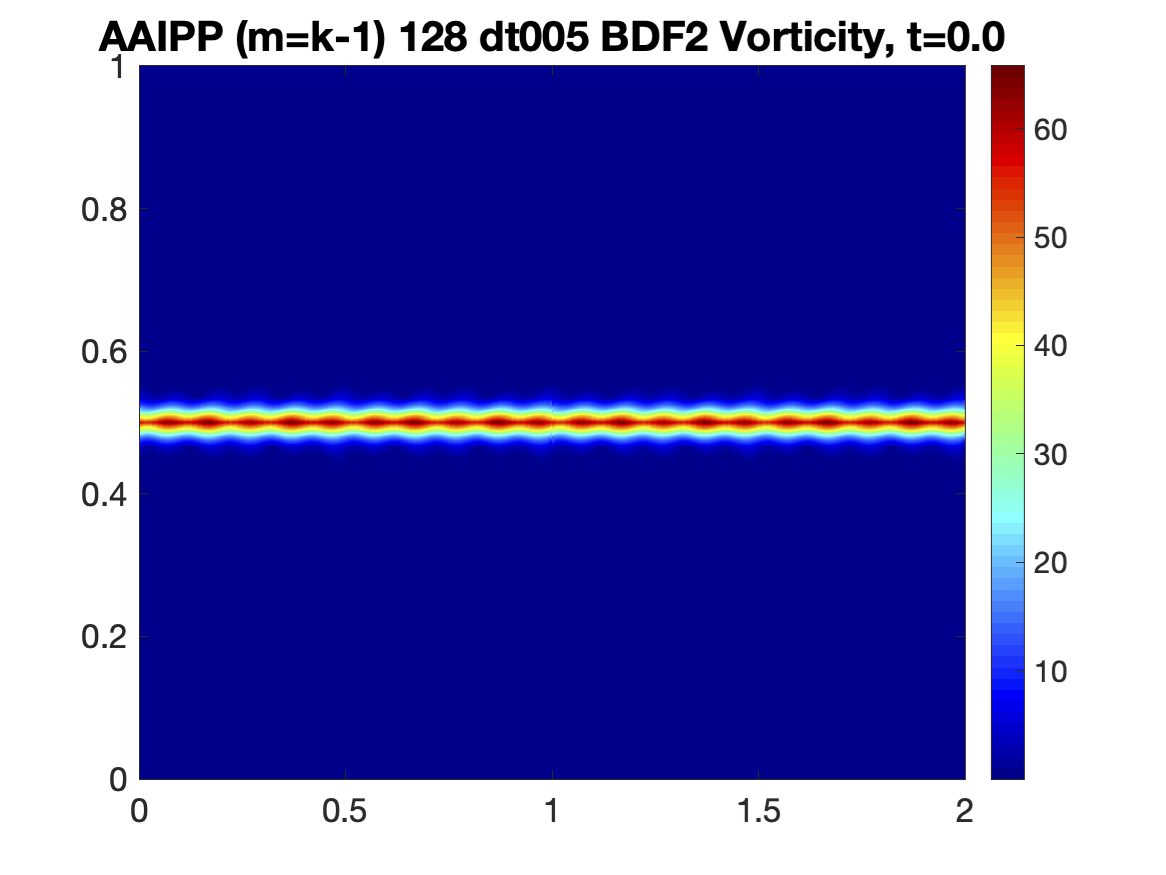}
		\includegraphics[width=.3\textwidth, height=.23\textwidth,viewport=65 40 520 390, clip]{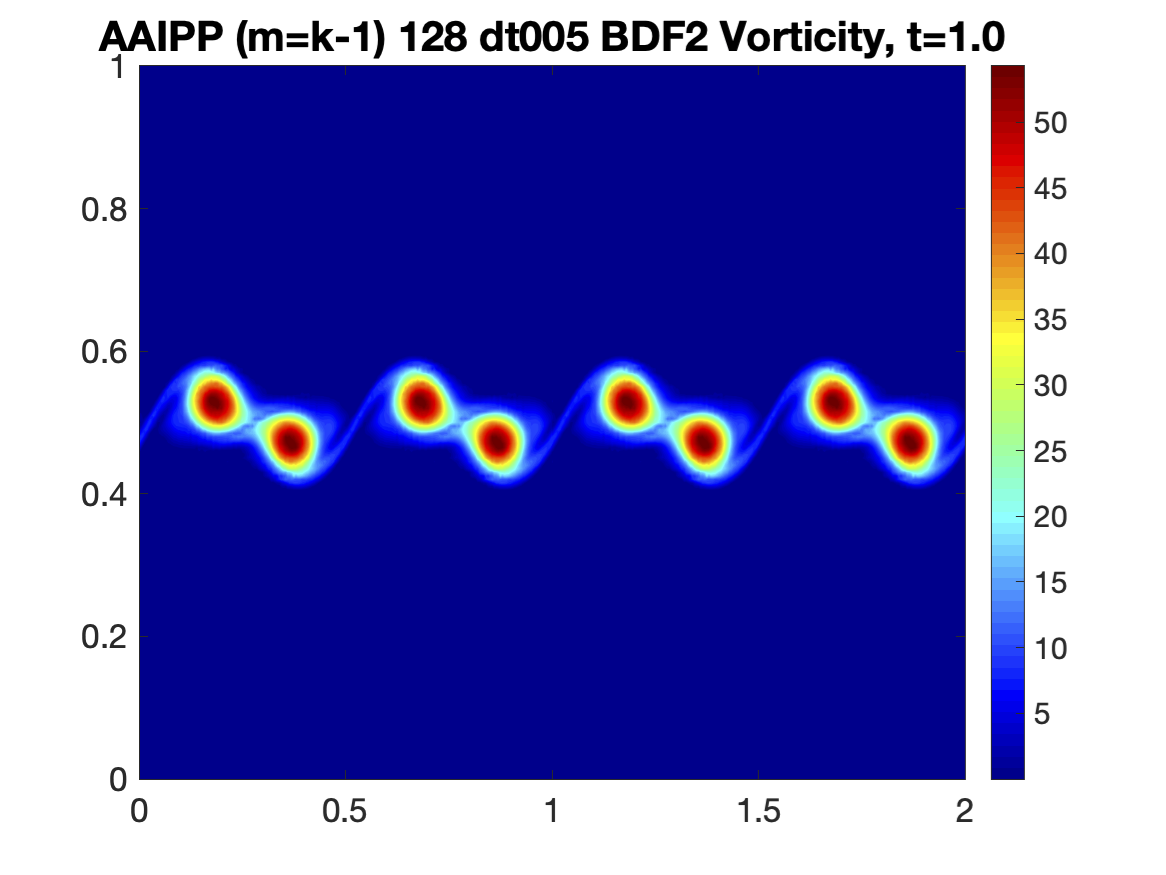} 
		\includegraphics[width=.3\textwidth, height=.23\textwidth,viewport=65 40 520 390, clip]{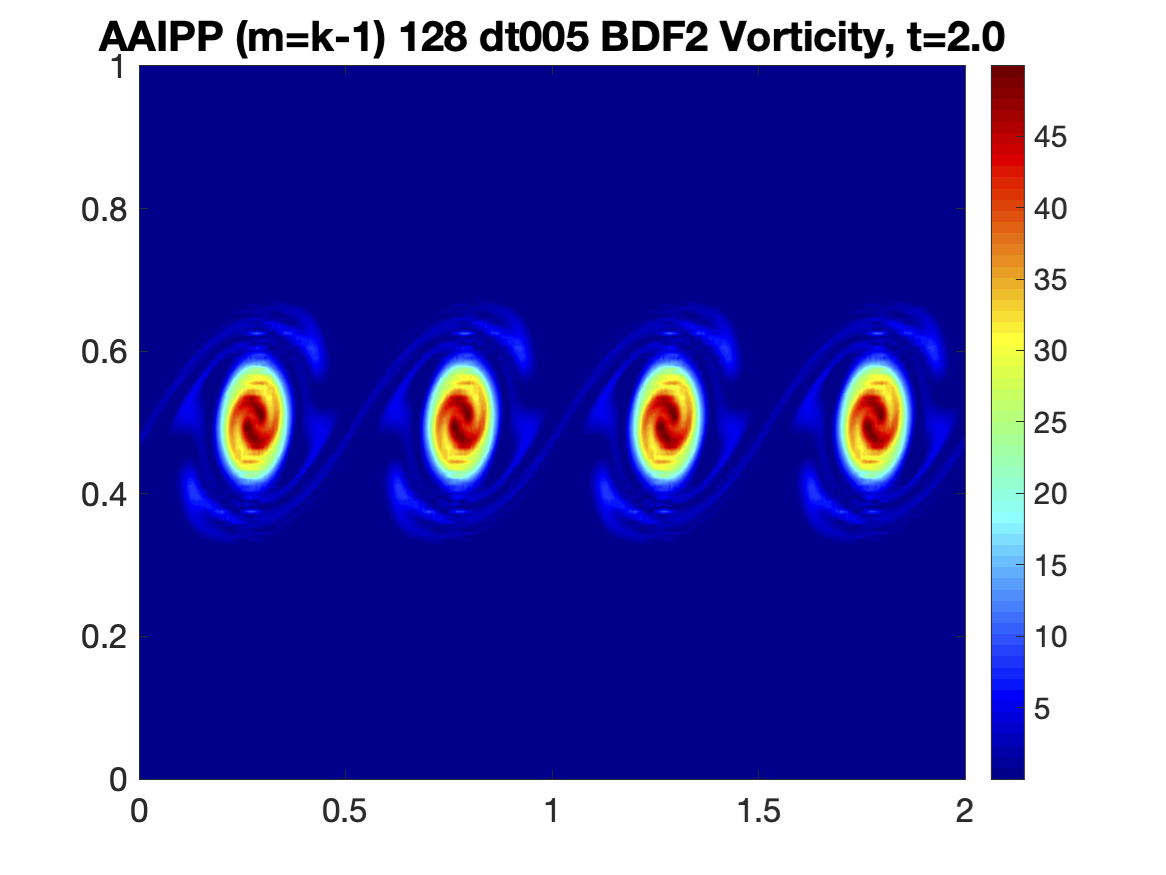}\\
		\includegraphics[width=.3\textwidth, height=.23\textwidth,viewport=65 40 520 390, clip]{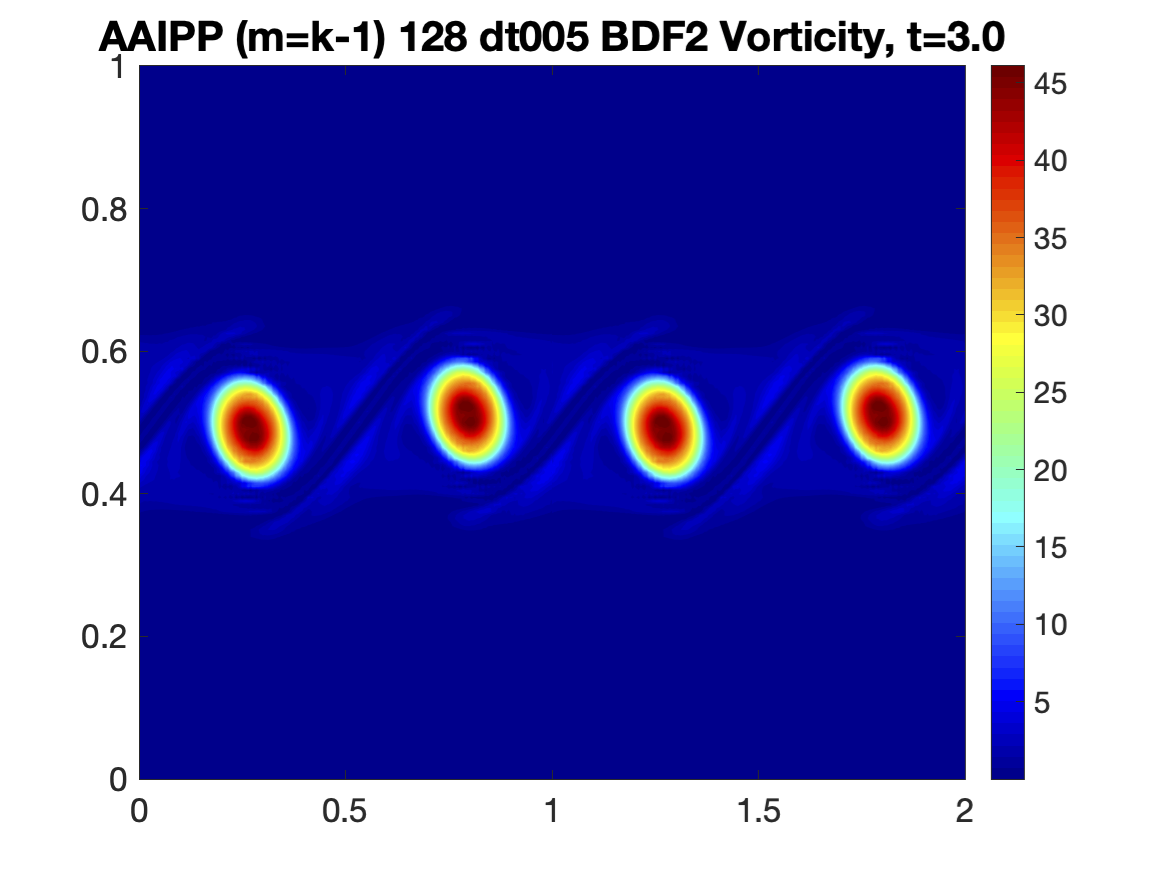}
		\includegraphics[width=.3\textwidth, height=.23\textwidth,viewport=65 40 520 390, clip]{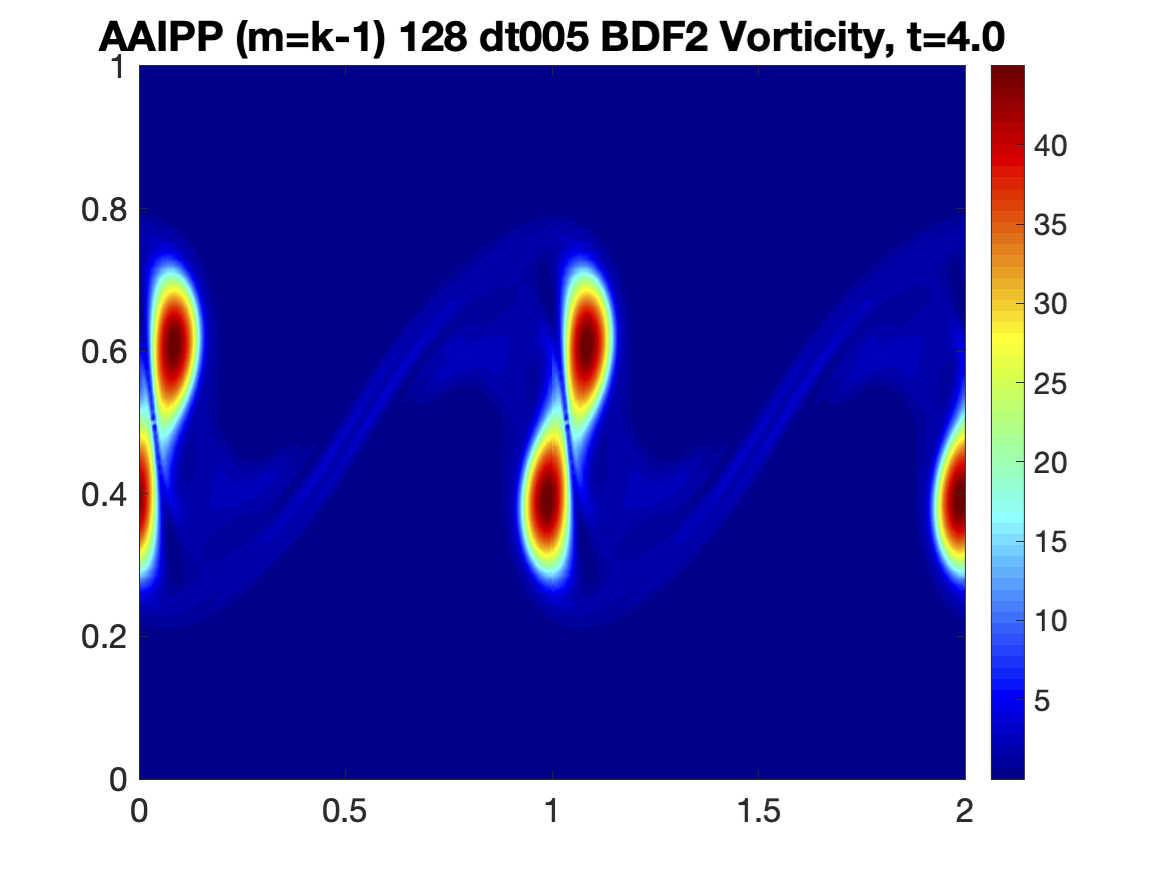} 
		\includegraphics[width=.3\textwidth, height=.23\textwidth,viewport=65 40 520 390, clip]{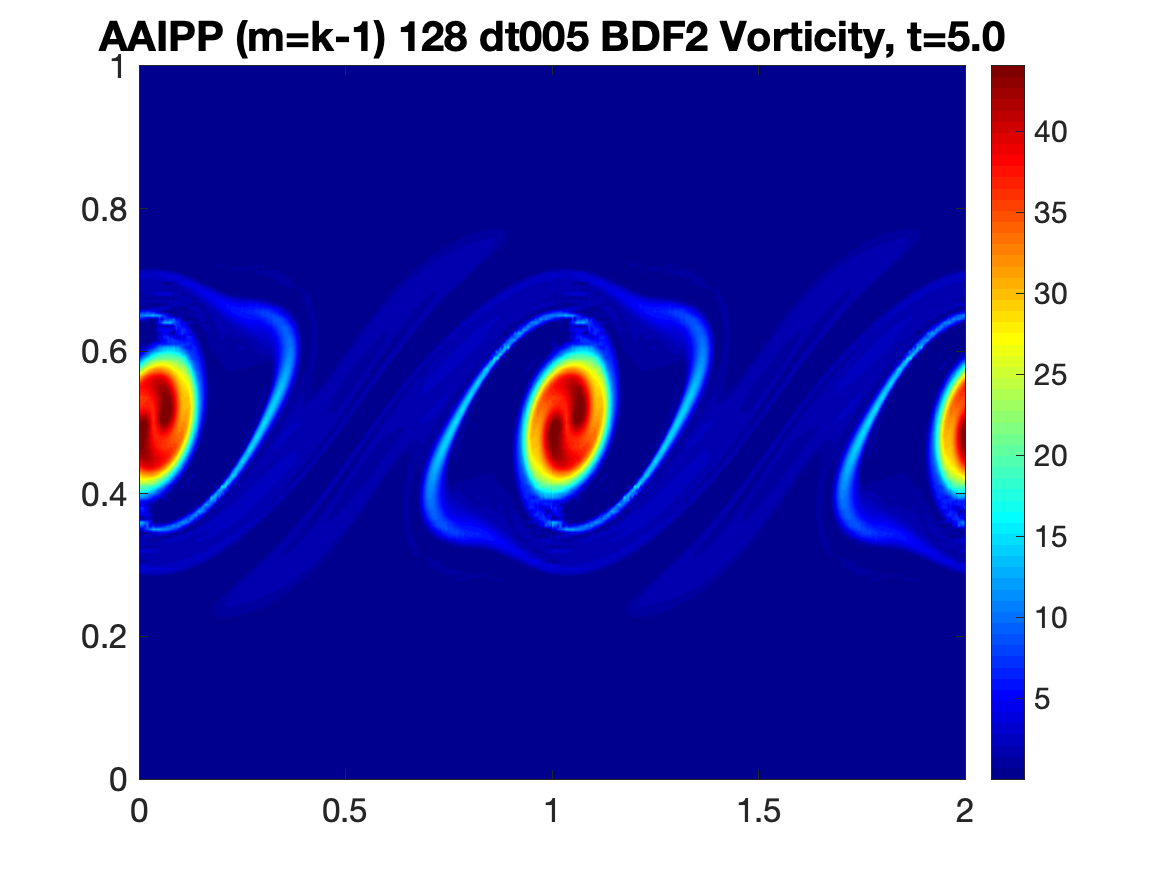}
	\end{center}
	\caption{\label{kh1000}
		Shown above are $Re=1000$ absolute vorticity contours for the iterated penalty solution, at times t=0, 1, 2, 3, 4, 5 and 6 (left to right, top to bottom).}
\end{figure}

\begin{figure}[!h]
	\begin{center}
		\includegraphics[width=.3\textwidth, height=.23\textwidth,viewport=0 0 520 400, clip]{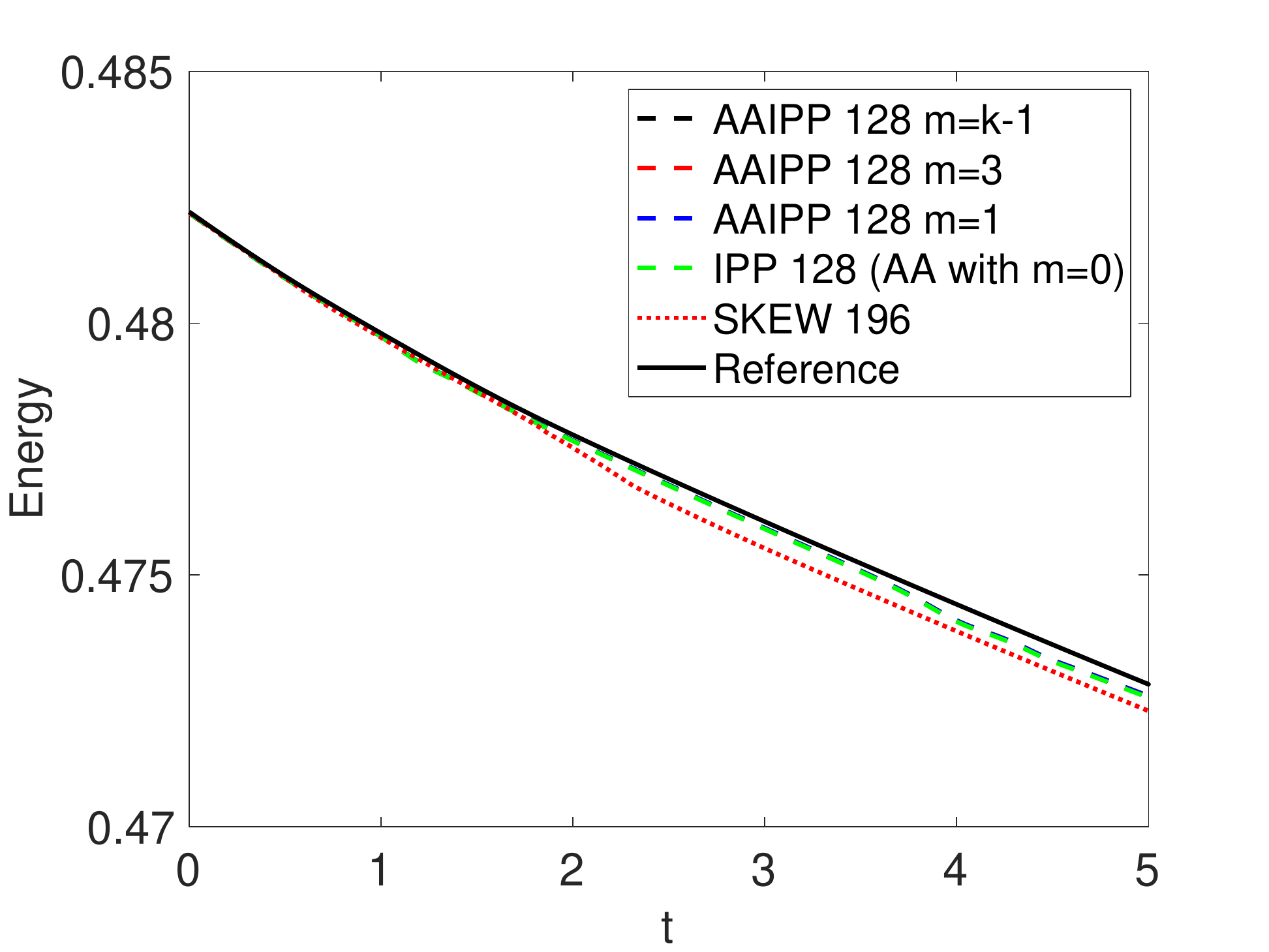}
		\includegraphics[width=.3\textwidth, height=.23\textwidth,viewport=0 0 520 400, clip]{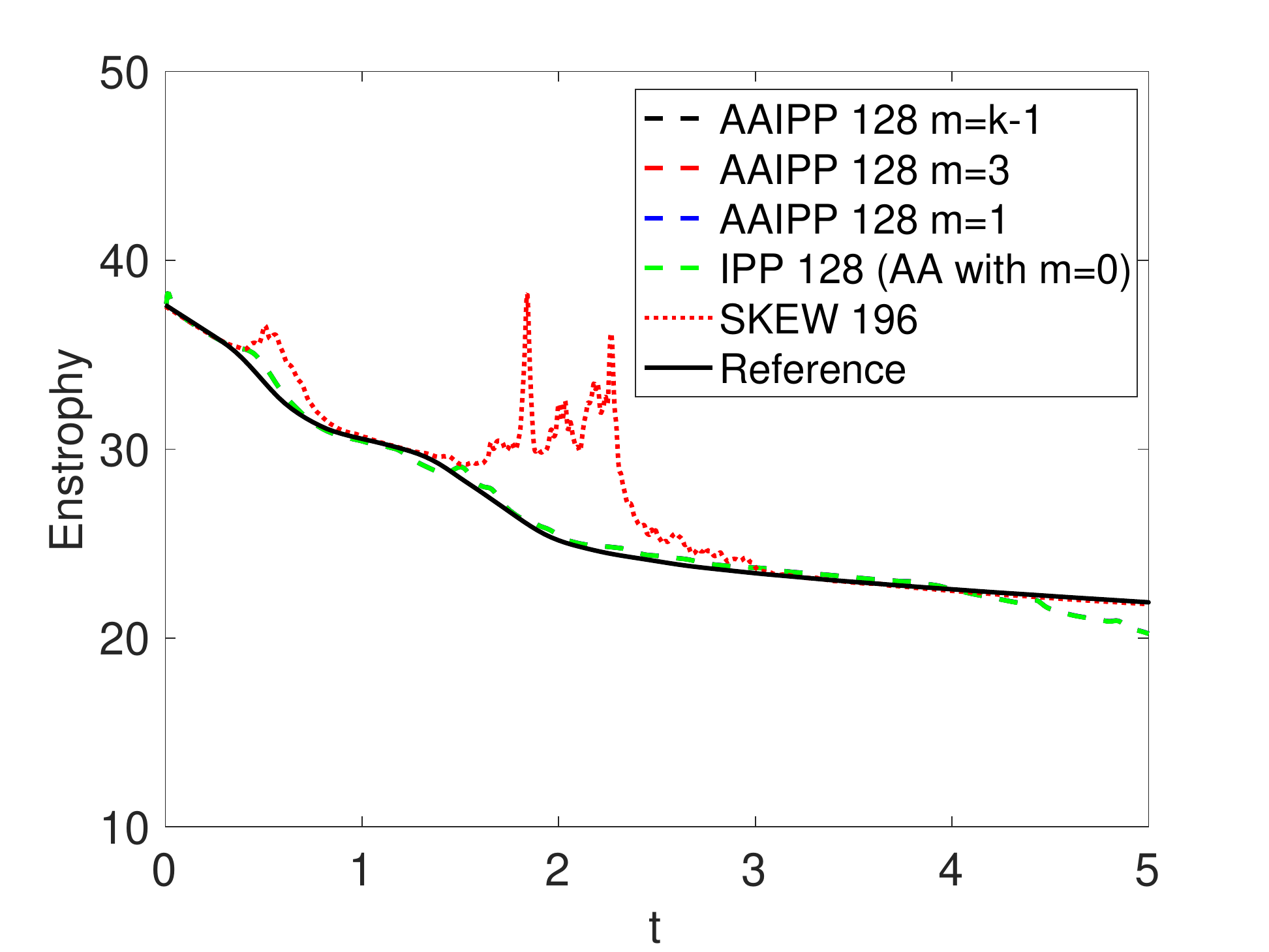}
		\includegraphics[width=.3\textwidth, height=.23\textwidth,viewport=0 0 520 400, clip]{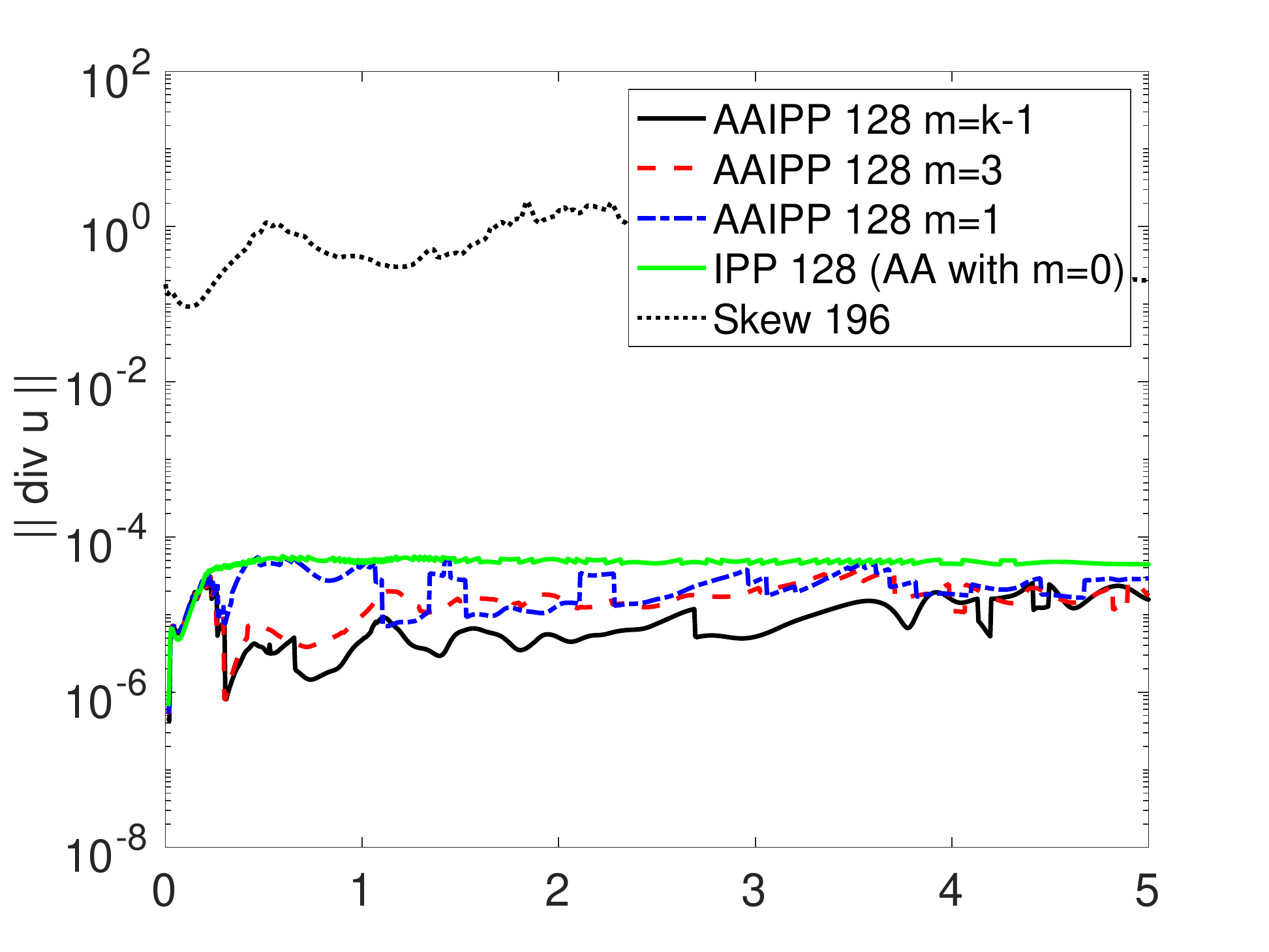}
	\end{center}
	\caption{\label{kh10002}
		Shown above are $Re=1000$ energy, enstrophy and divergence error versus time for the IPP/AAIPP, SKEW and reference solutions from \cite{SJLLLS18}.}
\end{figure}

\begin{figure}[!h]
	\begin{center}
		\includegraphics[width=.5\textwidth, height=.3\textwidth,viewport=0 0 520 400, clip]{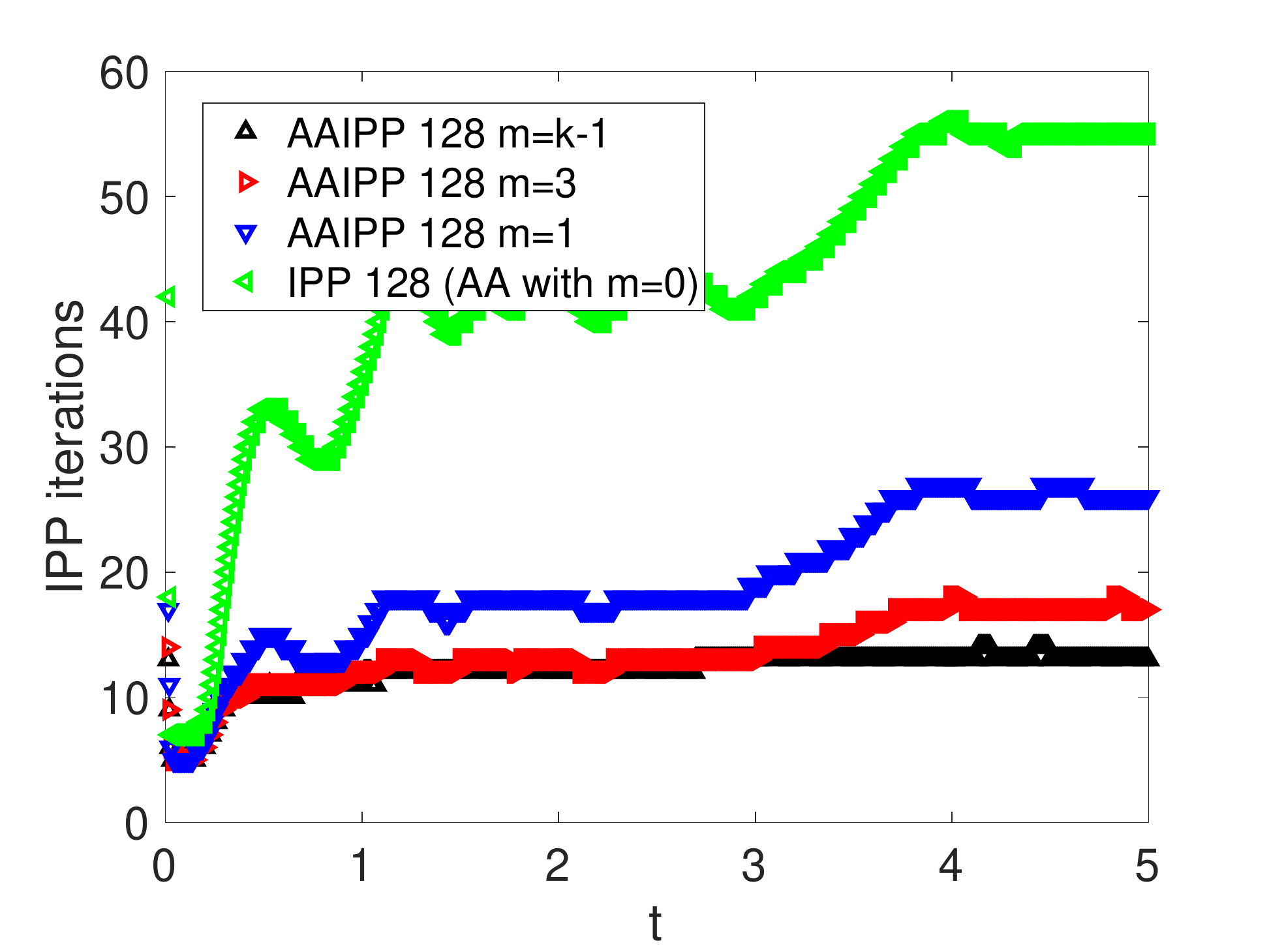}
	\end{center}
	\caption{\label{kh10003}
		Shown above are IPP/AAIPP total iteration counts at each time step, for $Re=1000$ and varying $m$.}
\end{figure}

For $Re=1000$, the IPP/AAIPP tests again used $h=\frac{1}{128}$, using the same setup as for the $Re=100$ case and again solutions are compared with a reference solution from \cite{SJLLLS18} and the SKEW solution (but here the SKEW solution uses a finer mesh with $h=\frac{1}{196}$.  As discussed in \cite{OR20}, even the $\frac{1}{196}$ mesh is not fully resolved for this Reynolds number.  The time step size was chosen to be $\Delta t=0.001$ for the IPP/AAIPP and SKEW simulations.  IPP/AAIPP vorticity contours are plotted in figure \ref{kh1000}, and match those from the reference solution qualitatively well (as discussed in \cite{SJLLLS18}, the evolution of this flow in time is very sensitive and it is not clear what is the correct behavior in time, even though it is clear how the flow develops spatially and how the eddies combine).  Figure \ref{kh10002} shows the energy, enstrophy and divergence of the computed and reference solutions (the reference solution has divergence on the order of roundoff error, and it is not shown), and we observe that the 1/196 SKEW solution gives the worst predictions of energy, enstrophy and (not surprisingly) divergence, even though IPP/AAIPP uses a significantly coarser mesh.  Finally, the impact of AA on the IPP iteration is shown in figure \ref{kh10003}, where we observe a significant reduction in iterations at each time step, with larger $m$ cutting the total number of iterations by a factor of four.  Hence overall, the AAIPP iteration is effective and efficient, and produces accurate divergence-free solutions.

\section{Conclusions}
In this paper, we studied IPP with penalty parameter $\epsilon=1$, and showed that while alone it is not an effective solver for the NSE, when used with AA and large $m$ it becomes very effective.  We proved the IPP fixed point function satisfies regularity properties which allow the AA theory of \cite{PR21} to be applied, which shows that AA applied to IPP will scale the linear convergence rate by the ratio gain of the underlying AA optimization problem.  We also showed results of three test problems which revealed AAIPP with $\epsilon=1$ is a very effective solver, without any continuation method or pseudo time-stepping.  While the classical IPP method is not commonly used for large scale NSE problems due to difficulties with linear solvers when $\epsilon$ is small, our results herein suggest it may deserve a second look since using AA allows for the penalty parameter $\epsilon=1$ to be used, which in turn will allow for effective preconditioned iterative linears solvers to be used such as those in \cite{HR13,BL12,OT14}.

%
%
%

\bibliographystyle{plain}
\bibliography{graddiv}

\end{document}